\title{Growth in sumsets of higher convex functions}
\author{Peter J. Bradshaw}
\theoremstyle{definition}
\newtheorem{define}{Definition}[section]
\theoremstyle{plain}
\newtheorem{thm}{Theorem}[section]
\newtheorem{lem}{Lemma}[section]
\newtheorem{prop}{Proposition}[section]
\newtheorem{conj}{Conjecture}[section]
\theoremstyle{remark}
\newtheorem*{rmk}{Remark}
\newtheorem*{notation}{Notation}
\newcommand{\N}{\mathbb{N}}
\newcommand{\Q}{\mathbb{Q}}
\newcommand{\R}{\mathbb{R}}
\newcommand{\C}{\mathbb{C}}
\newcommand{\F}{\mathbb{F}}
\begin{document}

\maketitle

\begin{abstract}
    The main results of this paper concern growth in sums of a $k$-convex function $f$.
    Firstly, we streamline the proof (from \cite{higher_convexity}) of a growth result for $f(A)$ where $A$ has small additive doubling, and improve the bound by removing logarithmic factors.
    The result yields an optimal bound for 
    \[ |2^k f(A) - (2^k-1)f(A)|. \]
    We also generalise a recent result from \cite{hansonRocheNewtonSenger2021}, proving that for any finite $A\subset \R$
    \[ | 2^k f(sA-sA) - (2^k-1) f(sA-sA)| \gg_s |A|^{2s}  \]
     where $s = \frac{k+1}{2}$.
    This allows us to prove that, given any natural number $s \in \N$, there exists $m = m(s)$ such that if $A \subset \R$, then 
    \begin{equation}\label{conj A-Aus}
    |(sA-sA)^{(m)}| \gg_s |A|^{s}.    
    \end{equation}
    This is progress towards a conjecture \cite{BalogRoche-NewtonZhelezov} which states that \eqref{conj A-Aus} can be replaced with 
    \[|(A-A)^{(m)}| \gg_s |A|^{s}.\]
    
    Developing methods of Solymosi, and Bloom and Jones, and using an idea from \cite{BHRConvexity}, we present some new sum-product type results in the complex numbers $\C$ and in the function field $\F_q((t^{-1}))$. 
\end{abstract}

\section{Introduction}
Studying growth under sums and differences of convex sets has received significant attention over the last twenty years. 
Recently, there has been important progress culminating in, among other results, a threshold-breaking bound for the number of dot products in the plane \cite{hansonRocheNewtonSenger2021}.
Another central problem which appears related is whether enough products of the difference set $A-A$ must grow ad infinitum. 

\subsection{Growth of sets under convex functions}
Let $A$ be a finite set of reals.
The sum-product problem is one of the most studied problems in additive combinatorics. 
It deals with the relative sizes of the sumset and product set:
\begin{equation}\label{sumsets productsets}
A+A = \{a+a': a,a'\in A\} \qquad \text{and} \qquad AA = \{aa': a,a'\in A\}. 
\end{equation}

The following was conjectured by Erd\"os and Szemer\'edi \cite{ErdosSzemeredi1983} (originally over the integers).
\begin{conj} [Sum-Product Conjecture]
For any positive $\delta <1$, there exists a constant $C(\delta)>0$, such that for any sufficiently large $A\subset \R$, we have 
\begin{equation}\label{sum-prod}
   \max\{|A+A|,|AA|\} \geq  C(\delta) |A|^{1+\delta}. 
\end{equation}
\end{conj}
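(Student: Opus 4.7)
The plan is to attack the Sum-Product Conjecture directly by pushing the $k$-convexity machinery advertised in the abstract to its strongest possible conclusion, exploiting the Elekes observation that $\log$ turns the product set of $A\subset \R_{>0}$ into the sum set of $\log A$. Suppose for contradiction that $|A+A|,\,|AA|\leq K|A|$ with $K = |A|^\delta$; one reduces immediately to $A \subset \R_{>0}$ by a standard pigeonholing. Then $A$ has small additive doubling and, simultaneously, $B := \log A$ has small additive doubling since $|B+B| = |AA|$. Because $\exp$ is $k$-convex for every $k$, applying the growth result from the abstract to $f=\exp$ on $B$ yields a strong lower bound on the higher sumset $|2^k A - (2^k - 1) A|$, while Pl\"unnecke--Ruzsa applied to $|A+A|\leq K|A|$ produces a matching upper bound; combining the two drives $\delta$ upwards.

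The concrete computation is straightforward: Pl\"unnecke gives $|2^k A - (2^k-1) A| \leq K^{c_k}|A|$ for some absolute $c_k$, while the convexity bound gives $|2^k A - (2^k-1) A| \gg_k |A|^{1 + \sigma(k)}$ for a growth exponent $\sigma(k)$ read off from the optimal bound $|2^k f(A) - (2^k-1) f(A)|$ highlighted in the abstract. Combining these forces $K \geq |A|^{\sigma(k)/c_k}$, so every $\delta<1$ is in reach provided we may choose $k$ --- possibly growing with $|A|$ --- so that $\sigma(k)/c_k > \delta$.

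The single main obstacle, and the reason the conjecture has stood since 1983, is precisely this final choice of $k$. The best available convexity exponent $\sigma(k)$ grows only polynomially (in fact linearly) in $k$, while the Pl\"unnecke--Ruzsa overhead $c_k$ grows exponentially in $k$; hence $\sigma(k)/c_k \to 0$ as $k\to\infty$ and the naive scheme saturates at some small fixed $\delta$. I see two plausible routes past this barrier, and the body of the proof would have to make one of them work. First, replace Pl\"unnecke with a Katz--Koester or BSG refinement $A'\subset A$ of comparable size on which $A+A$ controls $2^k A - (2^k-1)A$ with only polynomial loss in $k$, then feed this $A'$ into the convexity machinery. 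Second, prove a genuinely stronger growth inequality adapted not to $k$-convex functions but to $C^\infty$-convex functions like $\exp$, where one exploits convexity at every order simultaneously so that the effective $\sigma(k)$ itself grows exponentially with $k$. Absent a breakthrough on one of these fronts I do not see a way to close the exponential gap between $\sigma(k)$ and $c_k$, and this is where any serious attempt must concentrate its effort.
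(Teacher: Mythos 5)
There is no ``paper's own proof'' to compare against here: the statement you were asked about is the Erd\H{o}s--Szemer\'edi \emph{conjecture}, which the paper records as background and explicitly describes as open (``While the full sum-product conjecture remains open\dots''). The paper proves weaker, quantitatively explicit results (Theorems 1.1--1.6) that feed into this circle of ideas, but it does not, and does not claim to, prove the conjecture. Your proposal is therefore not a proof of the statement; it is a sketch of a programme together with an honest admission that the programme does not close. That admission is correct, and the gap you name is fatal, not cosmetic.

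Concretely: the growth input available from the paper is of the form $|2^k f(A)-(2^k-1)f(A)|\gg_k |A|^{k+1}K^{-(2^{k+1}-k-2)}$ (Theorem 1.2), i.e.\ a lower bound for a sumset with $2^{k+1}-1$ summands whose exponent in $|A|$ is only $k+1$. Any Pl\"unnecke--Ruzsa upper bound for a $(2^{k+1}-1)$-fold signed sumset of a set with doubling $K$ necessarily carries the exponent $2^{k+1}-1$ in $K$, and this exponent is essentially sharp (it is attained up to constants by sets exhibiting extremal Pl\"unnecke behaviour, and already the paper's own sharpness example $A=[N]$, $f(x)=x^{k+1}$ shows the lower bound $|A|^{k+1}$ cannot be raised). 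So the comparison $K^{2^{k+1}-1}|A|\gg |A|^{k+1}K^{-(2^{k+1}-k-2)}$ yields only $\delta\gg k/2^{k+2}$, which is maximised at a small fixed $k$ and tends to $0$ as $k\to\infty$; this is exactly why the higher-convexity machinery produces bounded improvements to the sum-product exponent rather than the full conjecture. Neither of your two escape routes is available from the paper: the first (a polynomial-in-$k$ Pl\"unnecke overhead for $2^k$-fold sumsets) contradicts the sharpness of Pl\"unnecke for long sumsets, and the second (an ``effective $\sigma(k)$ growing exponentially in $k$'' for $C^\infty$-convex functions) would be a new theorem strictly stronger than anything proved or conjectured in the paper (even Conjecture 1.2 there asserts only $|A|^{k+1}$ growth). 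As written, your argument establishes at best a fixed small value of $\delta$, not the conjecture.
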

\begin{notation}
We use Vinogradov's symbol extensively. We write $X \ll Y$ to mean that $X \leq CY$ for some absolute constant $C$, and $X \lesssim Y$ to mean that $X \leq C_1Y(\log Y)^{C_2}$ for some absolute constants $C_1,C_2$.
If $X \ll Y$ and $Y \ll X$, we will write $X \approx Y$.
If additionally a suppressed constant $C:=C(k)$ depends on some parameter $k$, we write $\ll_k, \lesssim_k, \approx_k$. Usually in such cases $k$ will be fixed and therefore small compared to the growing parameters. 

Equipped with this notation,
we could rewrite \eqref{sum-prod} as
\[ \max\{ |A+A|,|AA| \} \gg_\delta |A|^{1+\delta}. \]

\end{notation}

While the full sum-product conjecture remains open, proving the above for increasing values of $\delta$ represents the progress of the last forty years. The best known to date is $\delta = 1/3 + \frac{2}{1167} - o(1)$ due to Rudnev and Stevens \cite{rudnevStevens2021}.

The notation in \eqref{sumsets productsets} naturally generalises to incorporate differences and ratios as well as longer sums. One of the key quantities of interest in this paper will be
\[ A+A-A = \{a+a'-a'': a,a',a'' \in A \}. \]
For longer sums and products we will suppress notation so that $nA$ is the $n$-fold sumset and $A^{(n)}$ is the $n$-fold product set. Incorporating differences and division, we allow sets of the form $mA-nA$ and $A^{(m)}/A^{(n)}$.

A set $B = \{ b_1 < \dots < b_N \}$ is said to be convex if
\[ b_2-b_1 < b_3-b_2 < \dots < b_N - b_{N-1}. \]
It can be easily shown that a convex set can be expressed as $A = f([N])$ where $f$ is an increasing function with an increasing first derivative and $[N]:=\{1,\dots,N\}$.  

We say a $C^1(\R)$ function $f$ is \emph{convex} if both $f$ and $f'$ are either strictly increasing or strictly decreasing. There is an  aphorism in additive combinatorics that convex functions destroy additive structure; if $f$ is a convex function, then either $A$ or $f(A)$ can be additively structured, but not both.
Statements about the additive structure of $A$ and $f(A)$ are more general than sum-product results.
Indeed, if for some positive $\delta <1$, we have 
\begin{equation}\label{f sp thm}
\max\{|A+A|,|f(A)+f(A)|\} \gg |A|^{1+\delta},
\end{equation}
then one immediately obtains a sum-product theorem by choosing $f(x) = \log(x)$. 
Elekes, Nathanson and Ruzsa \cite{ElekesNathansonRuzsa} proved a more general form of \eqref{f sp thm} with $\delta = 1/4$. 
We firstly prove the following, a generalisation of \eqref{f sp thm} to longer sums/differences.

\begin{thm} \label{maintheorem}
Let $A$ be a finite set of reals and $f$ be a convex function. Then
\[ |A+A-A| |f(A)+f(A)-f(A)| \gg |A|^3. \]
\end{thm}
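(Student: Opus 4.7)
The plan is to examine the natural map $\Phi : A^3 \to (A+A-A) \times (f(A)+f(A)-f(A))$ defined by $\Phi(a,b,c) = (a+b-c,\, f(a)+f(b)-f(c))$, and show its image has size $\gg |A|^3$. The foundational convexity lemma is that $(a,b)\mapsto(a+b, f(a)+f(b))$ is injective on unordered pairs: writing $a=m-t$, $b=m+t$, the derivative of $t \mapsto f(m-t)+f(m+t)$ equals $f'(m+t) - f'(m-t)$, which is positive for $t > 0$ by strict monotonicity of $f'$. Setting $P := \{(a, f(a)) : a \in A\} \subset \R^2$, this yields $|P+P| \geq \binom{|A|+1}{2}$.

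For each $c \in A$ the translate $U_c := (P+P) - (c, f(c))$ lies in $(A+A-A) \times (f(A)+f(A)-f(A))$, so $\sum_{c \in A} |U_c| \geq |A|^3/2$. The task reduces to showing $\left|\bigcup_c U_c\right| \gg |A|^3$, equivalently that the multiplicity $m(s,t) := |\{c \in A : (s+c,\, t+f(c)) \in P+P\}|$ is bounded on average. Geometrically, as $c$ varies over $\R$ the point $(s+c, t+f(c))$ traces a translate of the graph of $f$, so $m(s,t)$ counts the intersections of this curve with the point set $P+P$.

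The natural tool is Pach--Sharir's Szemer\'edi--Trotter theorem for pseudolines, applied to the $|A|^2$ translates of the graph of $f$ by the vectors $(b-c, f(b)-f(c))$ for $(b,c) \in A \times A$: each translate contains $|A|$ points of $(A+A-A) \times (f(A)+f(A)-f(A))$, and two distinct translates intersect in at most one point by strict convexity of $f'$. Pach--Sharir then yields either the Elekes--Nathanson--Ruzsa-type bound $\gg |A|^{5/2}$ or the sought $\gg |A|^3$.

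The main obstacle is ruling out the weaker alternative. I would invoke a Pl\"unnecke--Ruzsa bootstrap: if $|A+A-A| \cdot |f(A)+f(A)-f(A)| = o(|A|^3)$ then $K := |A+A|/|A| \leq |A+A-A|/|A|$ is small, and a Konyagin-type estimate for the three-fold difference $|f(A)+f(A)-f(A)|$ --- in which the subtracted term allows one to avoid the logarithmic losses present in ENR-type bounds for $|f(A)+f(A)|$ --- should force $|f(A)+f(A)-f(A)| \gg |A|^2/K$. Combining with $|A+A-A| \geq |A+A| = K|A|$ gives $|A+A-A| \cdot |f(A)+f(A)-f(A)| \geq |A|^3$. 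Establishing this log-free convex growth bound for the three-fold sumset is the linchpin of the argument.
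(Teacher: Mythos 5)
Your argument has a genuine gap, and it sits exactly where you say it does: the ``linchpin'' estimate is never proved, and it is essentially the theorem itself. The incidence part of your plan is sound but structurally cannot reach $|A|^3$: with $\approx |A|^2$ translates of the graph of $f$, each carrying $|A|$ points of the grid $(A+A-A)\times(f(A)+f(A)-f(A))$, and pairwise intersections of size at most one, Pach--Sharir gives $|A|^3 \ll \bigl(N|A|^2\bigr)^{2/3}+N+|A|^2$ with $N = |A+A-A|\,|f(A)+f(A)-f(A)|$, i.e.\ only $N \gg |A|^{5/2}$. There is no dichotomy in Szemer\'edi--Trotter that sometimes outputs $|A|^3$; the $5/2$ exponent is precisely the Elekes--Nathanson--Ruzsa barrier, which is why that route stalls. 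You then propose to close the factor $|A|^{1/2}$ by invoking a ``log-free Konyagin-type'' bound $|f(A)+f(A)-f(A)| \gg |A|^2/K$ with $K = |A+A|/|A|$. But note that this single estimate already implies the theorem outright (since $|A+A-A|\ge|A+A|$), so your incidence work and the bootstrap framing do no work; everything reduces to that one unproven claim. And that claim is not available to cite: the log-free three-fold bound is exactly the new content here (the prior result of Hanson--Roche-Newton--Rudnev has $\log^3|A|$ losses), so assuming it is circular.

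The missing idea is an elementary one that avoids incidences entirely. Order $A=\{a_1<\dots<a_{|A|}\}$ and call $a_i$ good if the intervals $(a_i,a_{i+1}]$ and $(f(a_i),f(a_{i+1})]$ contain $\ll |A+A-A|/|A|$ elements of $A+A-A$ and $\ll |f(A)+f(A)-f(A)|/|A|$ elements of $f(A)+f(A)-f(A)$ respectively; by pigeonholing, a positive proportion of indices are good. The key observation (Lemma \ref{mainlemma}) is that if $(a_i,a_{i+1}]$ meets $A+A-A$ in at most $Z$ points, then $a_{i+1}-a_i$ is among the $Z$ smallest positive differences of $A-A$ (since $a_i+d\in(a_i,a_{i+1}]$ for every smaller positive difference $d$); likewise for $f(a_{i+1})-f(a_i)$. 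Hence the map $a_i\mapsto(a_{i+1}-a_i,\,f(a_{i+1})-f(a_i))$, which is injective by the mean value theorem and strict monotonicity of $f'$, sends $\gg|A|$ good indices into a set of size $\ll |A+A-A|\,|f(A)+f(A)-f(A)|\,|A|^{-2}$, and rearranging gives the theorem with no logarithmic loss. If you want to salvage your write-up, you should replace the incidence-plus-bootstrap scheme by an argument of this consecutive-gaps type (or actually prove your linchpin by such means), rather than treating it as a citable black box.
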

In \cite{higher_convexity}, Hanson, Roche-Newton and Rudnev proved the slightly weaker statement
\[ |A+A-A||f(A)+f(A)-f(A)| \gg \frac{|A|^3}{\log^3 |A|}. \]
The removal of the logarithmic factors makes the bound of Theorem \ref{maintheorem} sharp.
Indeed, if $f(x) = x^2$ and $A = [N]$ then
\[ |A+A-A||f(A)+f(A)-f(A)| \approx |A|^3. \]

Theorem \ref{maintheorem} is simply the base case for the forthcoming Theorem \ref{removing logs theorem}. 
It is stated separately because we present a short, new proof inspired by the very simple method by which Solymosi establishes the $\delta = 1/4$ sum-product bound in $\C$ \cite{SolymosiComplexSP}.

A simple idea which emerged in a recent paper of Hanson, Rudnev and the author \cite{BHRConvexity} is that $A+A-A$ must, in some sense, be evenly spaced among the elements of $A$. More specifically, if $a,a'\in A$ have very few elements of $A+A-A$ between them, then $a$ and $a'$ must be close to each other. The proof of Theorem \ref{maintheorem} elucidates how this idea may be leveraged in proving sumset bounds.

\subsection{Higher convex functions}
It is expected that the more convex a function, the less likely $f(A)$ is to exhibit additive structure.
We say a $C^k(\R)$ function is \emph{$k$-convex} if it has strictly monotone derivatives $f^{(0)}, f^{(1)},\dots, f^{(k)}$. 
Here monotone means either increasing or decreasing.
A $1$-convex function is simply a convex function and a $0$-convex function is a strictly monotone function.
To streamline exposition, we henceforth assume that a $k$-convex function $f$ has monotone \emph{increasing} derivatives $f^{(0)}, f^{(1)}, \dots, f^{(k)}$.
If any of them are decreasing, the proofs herein can be easily modified to handle this. Such modifications are discussed in \cite{higher_convexity}.
It is nonetheless worth noting that if $f$ has all derivatives positive, then $F(x) := -f(-x)$ has the signs of its derivatives alternating and $F$ has the same sumset behaviour as $f$. 
This is a more direct way to see that our theorems will apply in the important case where $f = \log$.

The forthcoming Theorem \ref{removing logs theorem} improves the main result in \cite{higher_convexity} by logarithmic factors. Specifically, we use an idea from \cite{BHRConvexity} to sidestep the need for dyadic pigeonholing.

\begin{thm} \label{removing logs theorem}
Let $A$ be a finite set of real numbers and $f$ be a $k$-convex function. Then if $|A+A-A|\leq K|A|$, we have
\[ |2^kf(A)-(2^k-1)f(A)| \gg_k \frac{|A|^{2^{k+1}-1}}{|A+A-A|^{2^{k+1}-k-2}} = |A|^{k+1} K^{-(2^{k+1}-k-2)}. \]
\end{thm}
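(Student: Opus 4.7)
The plan is to induct on $k$. The base case $k=1$ is the content of Theorem~\ref{maintheorem}, which gives $|2f(A)-f(A)| \cdot |A+A-A| \gg |A|^3$, equivalent to the target $|2f(A) - f(A)| \gg |A|^2/K$ (noting that $2^{k+1}-k-2 = 1$ at $k=1$).

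For the inductive step, the central observation is that for each $h \neq 0$, the shifted-difference function $g_h(x) := f(x+h) - f(x)$ is $(k-1)$-convex. Indeed, each derivative $g_h^{(j)}(x) = f^{(j)}(x+h) - f^{(j)}(x)$ remains strictly monotone for $j = 0, \ldots, k-1$, by the strict monotonicity of $f^{(j+1)}$. Writing $A_h := A \cap (A - h)$ and expanding each $g_h$-term gives the containment
\[ 2^{k-1} g_h(A_h) - (2^{k-1}-1) g_h(A_h) \subseteq (2^k - 1) f(A) - (2^k - 1) f(A), \]
and the right-hand side, after translating by any fixed element of $f(A)$, embeds into $2^k f(A) - (2^k - 1) f(A)$; hence its cardinality is a lower bound on $|2^k f(A) - (2^k - 1)f(A)|$. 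The induction hypothesis applied to $g_h$ on $A_h$ then produces a bound of order $|A_h|^k / K_h^{2^k - k - 1}$, where $K_h := |A_h + A_h - A_h|/|A_h|$.

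The heart of the argument is to combine these bounds over a carefully chosen family of shifts $h$. A single "popular" $h$, together with the crude Plünnecke bound $K_h \ll K \cdot |A|/|A_h|$, already yields the correct exponent of $K$ but only $|A|^k$, losing one factor of $|A|$ relative to the desired $|A|^{k+1}$. To recover this missing factor, I would aggregate contributions from $\gg |A|$ essentially independent shifts, invoking the idea from \cite{BHRConvexity}: after sorting $A$, the elements of $A+A-A$ are evenly distributed among the gaps of $A$, which produces a large family of consecutive differences $h = a_{i+1} - a_i$ that behave as separated shift parameters. This construction replaces the dyadic pigeonholing of \cite{higher_convexity} and removes the logarithmic factor.

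The main obstacle is the simultaneous bookkeeping in the inductive step: one must coordinate (i) Plünnecke control of $K_h$ in terms of $|A|/|A_h|$, (ii) a large supply of shifts with sufficiently large $|A_h|$, and (iii) a near-injective way to pack the $h$-indexed contributions back into the target sumset $2^k f(A) - (2^k-1)f(A)$. Each of these is routine in isolation, but maintaining compatibility without a logarithmic loss at any step is precisely where the spacing observation from \cite{BHRConvexity} becomes essential. Once these pieces fit together, the exponents in $|A|^{k+1}/K^{2^{k+1}-k-2}$ fall out of the recursion $f_k = f_{k-1} \cdot |A|/K^{2^k - 1}$ implicit in the target bound.
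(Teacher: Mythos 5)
Your outline has the right ingredients (induction on $k$, the $(k-1)$-convexity of $\Delta_h f$, consecutive differences as shift parameters, the spacing idea from \cite{BHRConvexity}), but it stops short of the one step that actually carries the proof: what you defer as ``(iii) a near-injective way to pack the $h$-indexed contributions back into the target sumset'' is not routine bookkeeping, it is the central mechanism, and your sketch does not supply it. The paper resolves it by proving a \emph{stronger} statement by induction, namely that all constructed elements of $2^k f(A)-(2^k-1)f(A)$ lie in the open interval $(\min f(A),\max f(A))$. With that strengthening, one restricts to good consecutive pairs (via Lemma \ref{mainlemma} the set $H$ of good consecutive gaps has $|H|\ll |A+A-A|/|A|$), and for each $h\in H$ and each $a_{e_i}\in A_h:=\{a_i: a_{i+1}-a_i=h\}$ applies the induction hypothesis to the $(k-1)$-convex function $g_i:=f(a_{e_i})+\Delta_h f$ on the \emph{truncated} set $A_h^i$ of elements of $A_h$ below $a_{e_i}$; the squeezing lemma (Lemma \ref{lemma squeeze}) then localises the output inside $(f(a_{e_i}),f(a_{e_i+1}))$, and these intervals are pairwise disjoint precisely because the shifts are consecutive gaps. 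Disjointness is what lets one add up the contributions over all $i$ and all $h$; without it, different shifts could produce the same elements and no aggregation beyond a single popular $h$ is justified, which is exactly the $|A|^k$ barrier you noticed.

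Two further discrepancies with the actual argument are worth noting. First, your $A_h:=A\cap(A-h)$ is too large: the disjoint-interval structure needs $A_h$ to consist of left endpoints of gaps of length exactly $h$, and the inner induction must be run on truncations $A_h^i$, not on all of $A_h$ at once. Second, no Pl\"unnecke control of $K_h$ is needed or used: the paper simply bounds $|A_h^i+A_h^i-A_h^i|\leq |A+A-A|$, sums to get roughly $\sum_{h\in H}|A_h|^{2^k}\,|A+A-A|^{-(2^k-k-1)}$, and finishes with H\"older's inequality together with $|H|\ll|A+A-A|/|A|$, which is where the exponent $2^{k+1}-k-2$ and the full $|A|^{2^{k+1}-1}$ emerge. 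As written, your proposal identifies the correct strategy but leaves the decisive localisation/disjointness argument and the final summation unproved, so it does not yet constitute a proof.
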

Setting
$A = [N]$ and $f(x) = x^{k+1}$
verifies that the term $|A|^{k+1}$ on the right-hand side the best possible. 

In \cite{hansonRocheNewtonSenger2021}, it is proved that if $A$ is a finite real set and $f$ is any convex function, then
\[ |2f(A\pm A) - f(A\pm A)| \gg |A|^2. \]
We prove the following generalisation.
\pagebreak
\begin{thm}\label{thm many A inside}
Let $A$ be a set of reals and $f$ be a $k$-convex function. 
If $k$ is even with $k = 2s$ then
\[ | 2^k f((s+1)A-sA) - (2^k-1) f((s+1)A-sA)| \gg_k |A|^{k+1} \]
and if $k$ is odd with $k = 2s-1$ then 
\[ | 2^k f(sA-sA) - (2^k-1) f(sA-sA)| \gg_k |A|^{k+1}. \]
\end{thm}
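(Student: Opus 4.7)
The plan is to proceed by induction on $k$. The base case $k = 1$ is the odd subcase with $s = 1$, for which the desired bound $|2f(A-A) - f(A-A)| \gg |A|^2$ is precisely the theorem of Hanson, Roche-Newton, and Senger from \cite{hansonRocheNewtonSenger2021} cited in the introduction. The inductive step will lift the bound one level of convexity at a time, alternating between the odd and even sub-cases.

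For the inductive step I handle the odd case $k = 2s - 1$; the even case is analogous, using $g_a(x) := f(x + a) - f(x)$ and input set $C := sA - sA$ in place of the choices below. Since the theorem is invariant under replacing $A$ by $A - a_0$ and $f$ by $\tilde f(y) := f(y + a_0)$ (still $k$-convex), assume WLOG $0 \in A$. For each $a \in A \setminus \{0\}$, introduce $g_a(x) := f(x) - f(x - a)$; since $g_a^{(j)}(x) = f^{(j)}(x) - f^{(j)}(x - a)$ is strictly monotone for $j \leq k - 1$, $g_a$ is $(k-1)$-convex. Apply the inductive hypothesis to $g_a$ with input set $C := sA - (s-1)A = (s'+1)A - s'A$ (with $s' = s - 1$, matching the even sub-case at level $k-1$):
\[ \bigl| 2^{k-1} g_a(C) - (2^{k-1}-1) g_a(C) \bigr| \gg_k |A|^k. \]
Expanding $g_a$ back in terms of $f$, each element of the left-hand side is a signed sum of $f$-values at arguments in $C \cup (C - a) \subseteq sA - sA$ (using $0, a \in A$). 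Counting positive and negative signs, the element lies, up to an additive constant, in $2^k f(sA - sA) - (2^k - 1) f(sA - sA)$. Thus for each nonzero $a$ we obtain a subset $T_a$ of the target set of size $\gg |A|^k$.

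The crux of the proof, and the main obstacle, is to aggregate over $a \in A$ to pick up the missing factor of $|A|$. For fixed template variables $(x_i), (y_j) \in C$, the $a$-dependent part of the corresponding element of $T_a$ is $\sum_j f(y_j - a) - \sum_i f(x_i - a)$, which by strict monotonicity of $f'$ (using the full $k$-convexity of $f$, not merely the $(k-1)$-convexity of $g_a$) is a nontrivial function of $a$ except in degenerate cases. A Cauchy-Schwarz or energy argument bounding the multiplicity of the natural map $(a, \text{template}) \mapsto \text{element}$, or alternatively the ``evenly-spaced'' perspective from \cite{BHRConvexity} that drives the proof of Theorem \ref{maintheorem}, should then deliver $\bigl| \bigcup_{a \in A} T_a \bigr| \gg |A|^{k+1}$. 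The delicate part is controlling these collisions cleanly enough that no extraneous logarithmic factors creep back in.
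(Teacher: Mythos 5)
Your reduction step is sound as far as it goes: for $a>0$ the function $g_a(x)=f(x)-f(x-a)$ is indeed $(k-1)$-convex, the sign-counting (with a fixed additive shift $f(c_0)$, $c_0\in sA-sA$) does place the resulting elements inside $2^kf(sA-sA)-(2^k-1)f(sA-sA)$, and this "differentiate in $a$ to drop one level of convexity" move is the same basic mechanism the paper uses (there via $\Delta_d f$). But the proof has a genuine gap exactly where you flag it: the aggregation over $a\in A$. Producing, for each $a$, a set $T_a$ of size $\gg_k|A|^k$ gives nothing about $\bigl|\bigcup_a T_a\bigr|$ unless you control collisions, and your only tool for this is the observation that, for a \emph{fixed} template $(x_i),(y_j)$, the map $a\mapsto$ element is typically injective. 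That does not bound the multiplicity of the global map $(a,\text{template})\mapsto\text{element}$: different templates paired with different values of $a$ can produce the same output, and bounding such coincidences amounts to an energy estimate for $\approx 2^{k+1}$-fold signed sums of $f$-values, which is a hard problem in its own right, not a routine Cauchy--Schwarz step. So the "crux" is not a delicate finishing touch; it is the entire content of the extra factor of $|A|$, and it is left unproved.

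The paper sidesteps collision-counting altogether by proving a stronger, \emph{localized} inductive statement (Proposition \ref{thm many A inside induction}): one fixes a single small shift $d$ (in the application, the minimal positive difference $d_0\in A-A$, thinned so that $kd_0$ fits below every gap of $A$), builds the level-$(k-1)$ configuration inside each interval $(f(a_i),f(a_{i+1}))$ via the squeezing lemma applied to $g_i=f(a_i)+\Delta_d f$, and records in the induction hypothesis that all constructed elements lie in $(\min f(A),\max f(A))$. Because the intervals $(f(a_i),f(a_{i+1}))$ are pairwise disjoint, the $\gg (i-1)^k$ elements produced for different $i$ never collide, and summing over $i$ gives $|A|^{k+1}$ with no multiplicity analysis and no logarithmic losses. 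If you want to rescue your scheme, the fix is to imitate this: strengthen your inductive statement so that the output of the level-$(k-1)$ application attached to a given $a$ (or given base point) is confined to an interval disjoint from the others, rather than trying to disentangle the unions $T_a$ after the fact. As written, the argument does not establish the theorem.
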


Notice that in both cases, the number of summands inside the function $f$ is the same as the power of $|A|$ on the right-hand side. 
For the purposes of discussion, assume we are in the odd $k$ case.
A generic set $A$ is expected to have $|sA-sA| = |A|^{2s} = |A|^{k+1}$, and therefore $|f(sA-sA)| = |A|^{k+1}$ as well since $f$ is a monotone function. 
However, obviously $sA-sA$ may be significantly smaller than $|A|^{k+1}$.
Theorem \ref{thm many A inside} affirms that even in this case, sufficiently many sums (and differences) of $f(sA-sA)$ guarantee the same $|A|^{k+1}$ growth. 

While it may be possible to improve Theorem \ref{thm many A inside} by reducing the number of summands of $f(sA-sA)$ on the left hand side, the bound $|A|^{k+1}$ cannot be improved, evinced by setting
$A = [N]$ and $f(x) = x^{k+1}$.
It is expected that $sA-sA$ is not optimal, and we conjecture the following.

\begin{conj}\label{f conj}
Let $A$ be a set of reals and $f$ be a $k$-convex function. Then
\[ |2^kf(A-A) - (2^k-1)f(A-A)| \gg_k |A|^{k+1}. \]
\end{conj}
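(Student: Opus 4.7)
My plan is to attempt induction on $k$, with base case $k=1$ provided by the Hanson--Roche-Newton--Senger bound $|2f(A-A) - f(A-A)| \gg |A|^2$. The key observation enabling induction is that if $f$ is $k$-convex with monotone increasing derivatives then for each fixed $h>0$ the discrete difference
\[ f_h(x) \; := \; f(x+h) - f(x) \]
is itself $(k-1)$-convex, since $f_h^{(j)}(x) = f^{(j)}(x+h) - f^{(j)}(x)$ is monotone increasing for $j = 0, 1, \ldots, k-1$. Applying the inductive hypothesis to $f_h$ on the set $A$ yields, for each $h \in A-A$,
\[ |2^{k-1} f_h(A-A) - (2^{k-1}-1) f_h(A-A)| \; \gg \; |A|^k. \]
Expanding $f_h = f(\cdot + h) - f(\cdot)$ turns this into a statement about sums of $f$-values at points of $A-A$ and $A-A+h$. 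The catch is that this yields a \emph{symmetric} $(2^k-1)$-vs-$(2^k-1)$ sum rather than the asymmetric $2^k$-vs-$(2^k-1)$ form we want, so a further step --- most naturally a Cauchy--Schwarz over $h \in A-A$ --- is required to recover the asymmetry and extract the extra factor of $|A|$.

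An alternative is to proceed by contradiction via a higher-energy argument. Setting $X = f(A-A)$ and $M := |2^k X - (2^k-1) X|$, Cauchy--Schwarz gives $M \gg |X|^{2(2^{k+1}-1)} / E$, where $E$ denotes the appropriate $(2^{k+1}-1)$-variable additive energy of $X$. The task then becomes bounding $E$ above via the $k$-convexity of $f$. A dichotomy on the doubling of $A$ is natural: in the large-doubling regime, try Theorem \ref{removing logs theorem} applied directly to the set $A-A$, whose triple sumset is controlled by Pl\"unnecke--Ruzsa; in the small-doubling regime, invoke Theorem \ref{thm many A inside} combined with a covering-lemma decomposition of $f(sA-sA)$ along translates.

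The main obstacle, and the reason the conjecture remains open, is that Theorem \ref{thm many A inside} depends critically on the inner set $sA-sA$: each of its elements admits $\sim |A|^{2s-1}$ representations, seeding the Cauchy--Schwarz iteration. In $A-A$ the generic representation function is $O(1)$, so the existing arguments do not transfer. Moreover, since $f$ is nonlinear, Pl\"unnecke--Ruzsa cannot translate sumset bounds from $f(sA-sA)$ down to $f(A-A)$: an element of $sA-sA$ has many representations as a sum of elements of $A-A$, but $f$ of a sum is not a sum of $f$'s. A careful bookkeeping shows that the Pl\"unnecke--Ruzsa cost of passing from $A-A$ up to $sA-sA$ is $K$ raised to an exponential-in-$k$ power, while the gain from replacing $|A-A|$ by $|A|$ in Theorem \ref{removing logs theorem} is only $K^{k+1}$, leaving a genuine gap across all regimes of doubling. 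Closing it likely requires a fundamentally new structural input --- perhaps a nonlinear analogue of the even-spread phenomenon from \cite{BHRConvexity}, refined enough to transmit $k$-convexity information from the symmetric inductive output into the asymmetric target sumset.
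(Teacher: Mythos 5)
This statement is Conjecture \ref{f conj}: the paper offers no proof of it, and states it precisely because the known methods fall short. Your proposal, to its credit, ends in the same place --- it is a survey of strategies and obstructions rather than a proof --- so there is no complete argument to certify. One concrete flaw in the inductive sketch is worth naming, because it is exactly the reason the paper proves Theorem \ref{thm many A inside} for $sA-sA$ rather than $A-A$. If you apply the inductive hypothesis to $f_h(x)=f(x+h)-f(x)$ on the set $A-A$, the resulting elements are combinations of values $f(x)$ and $f(x+h)$ with $x\in A-A$ and $h\in A-A$; the shifted arguments $x+h$ lie in $2A-2A$, not in $A-A$. So the induction does not close on sumsets of $f(A-A)$: each iteration enlarges the inner set by another copy of $A-A$ (or a translate), and after $k$ steps one has only a statement about $f(sA-sA)$ with $s\approx k/2$ --- which is precisely Proposition \ref{thm many A inside induction} and Theorem \ref{thm many A inside}, not the conjecture. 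The symmetric-versus-asymmetric issue you flag is comparatively minor (the paper's squeezing argument handles it by placing the constructed elements inside disjoint intervals $(f(a_i),f(a_{i+1}))$); the real loss is the growth of the inner set, and a Cauchy--Schwarz over $h$ does not repair it.

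Your second paragraph's energy/doubling dichotomy runs into the difficulty you yourself identify: Pl\"unnecke--Ruzsa controls additive doubling of $A-A$ versus $sA-sA$, but since $f$ is nonlinear it transfers no information between $f(A-A)$ and $f(sA-sA)$, and Theorem \ref{removing logs theorem} applied to $B=A-A$ only gives $|2^kf(B)-(2^k-1)f(B)|\gg_k |B|^{k+1}|B+B-B|^{-(2^{k+1}-k-2)}$, which is worthless without strong doubling hypotheses. So your diagnosis of the obstruction is essentially accurate and consistent with the paper's own framing; just be clear that what you have written is evidence that the conjecture is hard, not a proof of it, and that the fixable-looking ``catch'' in the induction is in fact the fundamental one.
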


We feel that bounding the size of sumsets of convex functions with many summands as well as the iteration techniques to prove them are of independent interest. However, additionally they allow improvements of bounds for $2$-fold sumsets and energy of highly convex sets.
See \cite{BHRConvexity} for more details.

\subsection{Applications}
\subsubsection{Growth for products of generalised difference sets}
It was conjectured in \cite{BalogRoche-NewtonZhelezov} that for any $s >0$, there exists $m = m(s)$ such that if $A$ is a finite real set then 
\begin{equation}\label{growthA_A}
|(A-A)^{(m)}| \gg_s |A|^s,
\end{equation}
where $(A-A)^{(m)}$ denotes an $m$-fold product set $\underbrace{(A-A)\dots (A-A)}_{m \text{ times}}$.
This was proved in \cite{HansonRoche-NewtonZhelezov} for the case when $A \subset \Q$.
Balog, Roche-Newton and Zhelezov proved \eqref{growthA_A} for $s=3$, and additionally that for $s=17/8$, choosing $m = 3$ suffices; the first known results for $s>2$.
Recently, Hanson, Roche-Newton and Senger proved (implicitly) \cite{hansonRocheNewtonSenger2021} that \eqref{growthA_A} holds if $m = 8, s = 33/16$, but their method was stronger in the sense that some of the $A-A$ terms could be replaced with $A-a$ for specific values of $a\in A$. 
They use this to improve the best known lower bound for $|\Lambda(P)|$ where $\Lambda(P)$ is the set of dot products induced by a point set $P$ in $\R^2$. They proved
\[ |\Lambda(P)| \gtrsim |P|^{\frac{2}{3} + \frac{1}{3057}},  \]
the first result to break the threshold $|P|^{2/3}$.
See \cite{Rudnev_crossratios} for more connections between similar problems and growth in $A-A$.

We prove a result approaching this conjecture from a different direction, namely allowing for products of \emph{many}-fold differences.
\begin{thm}\label{Thm towards (A-A)^k}
Given any natural number $s \in \N$, there exists $m = m(s)$ such that if $A$ is a finite set of reals, then 
\[ |(sA-sA)^{(m)}| \gg_s |A|^{s}. \]
\end{thm}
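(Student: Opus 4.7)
The plan is to derive Theorem \ref{Thm towards (A-A)^k} from Theorem \ref{thm many A inside} by specialising the convex function to $f(x) = \log x$. Fix $k = 2s-1$, so that the odd case of Theorem \ref{thm many A inside} applies, and set $D := sA - sA$ and $m := 2^k = 2^{2s-1}$. The key observation is that, for the logarithm, the iterated sum-difference set $2^k \log D - (2^k-1) \log D$ is precisely the image under $\log$ of the ratio set $D^{(2^k)}/D^{(2^k-1)}$. Hence Theorem \ref{thm many A inside} would yield
\[
\left| \frac{D^{(2^k)}}{D^{(2^k-1)}} \right| \gg_s |A|^{k+1} = |A|^{2s}.
\]

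To finish I would use two elementary facts. First, for any finite sets $X, Y \subset \R$ the trivial bound $|X/Y| \leq |X|\cdot|Y|$ holds, since each element of $X/Y$ is determined by at least one pair $(x,y) \in X \times Y$. Second, the sequence $j \mapsto |D^{(j)}|$ is non-decreasing, because multiplication by any fixed nonzero $d \in D$ is an injection $D^{(j)} \hookrightarrow D^{(j+1)}$. Combining these,
\[
\left| \frac{D^{(2^k)}}{D^{(2^k-1)}} \right| \leq |D^{(2^k)}| \cdot |D^{(2^k-1)}| \leq |D^{(m)}|^2,
\]
so $|D^{(m)}|^2 \gg_s |A|^{2s}$ and therefore $|(sA-sA)^{(m)}| \gg_s |A|^s$, as required.

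The main obstacle is that $\log$ is not defined on all of $\R$, whereas $D = sA - sA$ is symmetric about $0$ and (for $|A| \geq 2$) contains negative elements as well as $0$; in particular Theorem \ref{thm many A inside} as stated does not directly apply with $f = \log$. The natural remedy is to restrict to the positive half $D_+ := D \cap \R_+$, noting that by symmetry $|D^{(m)}| = 1 + 2|D_+^{(m)}|$, so it suffices to prove $|D_+^{(m)}| \gg_s |A|^s$. The argument above then goes through provided one can establish, via a mild adaptation of the proof of Theorem \ref{thm many A inside} (using only the local $k$-convex structure of $f$ on an interval containing the relevant portion of the set), that $|2^k \log D_+ - (2^k-1) \log D_+| \gg_s |A|^{2s}$. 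This adaptation, rather than the manipulations of product sets above, is the only genuinely subtle point in the argument.
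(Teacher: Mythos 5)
Your proposal follows the paper's own route: specialise the odd case $k=2s-1$ of Theorem \ref{thm many A inside} to $f=\log$, identify $2^k\log D-(2^k-1)\log D$ with the logarithm of the quotient set $D^{(2^k)}/D^{(2^k-1)}$ for $D=sA-sA$, and finish with the crude bound $|X/Y|\le |X||Y|$ together with monotonicity of $j\mapsto |D^{(j)}|$, taking $m=2^{2s-1}$. These product-set manipulations are correct, and in fact spelled out more carefully than in the paper, which compresses them into ``a crude upper bound on the size of the quotient set.''

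The step you leave open is the only substantive one, and it is precisely the step the paper passes over in silence: the paper simply writes ``set $f(x)=\log(x)$,'' even though $sA-sA$ contains $0$ and negative numbers, so Theorem \ref{thm many A inside} does not apply verbatim. You correctly reduce to showing $|2^k\log D_+-(2^k-1)\log D_+|\gg_s |A|^{2s}$ for $D_+=(sA-sA)\cap\R_{>0}$, but your description of the required adaptation (``local $k$-convexity on an interval containing the relevant portion of the set'') misses where the actual work lies. That the proofs of Proposition \ref{thm many A inside induction} and Theorem \ref{thm many A inside} only ever use convexity of $f$ on an interval containing the constructed configuration is harmless, since $\log$ has strictly monotone derivatives of every order on all of $(0,\infty)$. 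The real issue is that the squeezed set $S_{k,M}\subset sA-sA$ built in the proof of Theorem \ref{thm many A inside}, namely the blocks $\{a_{kj}-sd_0-b'+\ell d_0:\ 0\le\ell\le k\}$, need not consist of positive numbers, so one must check that the construction can be rearranged to land inside $D_+$ while keeping $\gg_k|A|$ blocks. This is true but needs a short argument: for instance, if $b'$ lies in the bottom half of $A$ then all base points with $a_{kj}>b'+sd_0$ (at least about half of them) give entirely positive blocks; otherwise $b$ lies in the top half, one can form the mirrored blocks $\{a_{kj}-b+m d_0:\ -(s-1)\le m\le s\}\subset sA-sA$, which are entirely negative for $\gg_k|A|$ indices, and reflect them into $D_+$ using the symmetry $sA-sA=-(sA-sA)$; in either case the spacing condition \eqref{d condition} still holds with $d=d_0$, and Proposition \ref{thm many A inside induction} applied to the resulting configuration gives the required $\gg_s|A|^{2s}$ elements. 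Without some such verification your argument, like the paper's two-line version, is incomplete exactly at the point you flagged.
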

The proof of Theorem \ref{Thm towards (A-A)^k} is an easy corollary of Theorem \ref{thm many A inside}, and is proved in \S\ref{manyinside section}.
If Conjecture \ref{f conj} holds, then \eqref{growthA_A} is the natural corollary.

\subsubsection{Sum-product type results}
Studying the sizes of sumsets and product sets which are not the traditional $A+A$ and $AA$ has led to many variations of the sum-product problem. See for example  \cite{BourgainChang2004, MurphyRudnevShkredovShteinikov,
StevensWarren2021}. 
What these and many other results \emph{do} share with the sum-product problem is they all enshrine the philosophy that additive structure and multiplicative structure cannot coexist in the same set. We may refer to any such result as a sum-product \emph{type} result. 

The sum-product problem has been studied in other fields as well. We particularly note that for $A\subset \C$, \eqref{sum-prod} is known for all $\delta < 1/3 + c$ (for some small c) \cite{basitLund} and for subsets of a function field $A \subset \F_q((t^{-1}))$, it is known for all $\delta < 1/5$ (with the implied constant $C$ also depending on $q$) \cite{BloomJones}. In this paper, we prove a related sum-product type result in each setting. Over the  complex numbers we have the following.

\begin{thm} \label{maintheoremC}
Let $A \subset \C$ be a finite set. Then the following holds:
\[ |A+A-A||AA|^2 \gg |A|^4. \]
\end{thm}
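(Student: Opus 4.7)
The plan is to mirror the real-case proof. Over $\R$, Theorem \ref{maintheoremC} is a direct consequence of Theorem \ref{maintheorem} applied to $f(x) = \log x$ (for $A \subset \R_{>0}$, which suffices after splitting by sign and translation): this gives $|A+A-A| \cdot |AA/A| \gg |A|^3$, and the multiplicative Plünnecke--Ruzsa inequality $|AA/A| \leq |AA|^2/|A|$ then yields $|A+A-A| \cdot |AA|^2 \gg |A|^4$. Over $\C$ the logarithm is multi-valued, so the first step must be replaced by a direct complex-geometric argument.

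The substitute I would aim to prove is the mixed inequality
\[ |A+A-A| \cdot |A/A| \gg |A|^3 \qquad \text{for finite } A \subset \C, \]
from which, via Plünnecke--Ruzsa $|A/A| \leq |AA|^2/|A|$, we obtain $|A+A-A| \cdot |AA|^2 /|A| \gg |A|^3$ and hence the theorem. To prove the mixed inequality I would adapt Solymosi's complex sum-product method, which is the explicit model for the proof of Theorem \ref{maintheorem}. After a preliminary angular pigeonhole to reduce $A$ to a subset whose arguments are confined to a small sector, sort $A/A$ cyclically by argument as $r_1, \ldots, r_M$ and for each $r \in A/A$ define the complex line $L_r = \{(a, b) \in A \times A : a = rb\}$, so that $\sum_r |L_r| = |A|^2$ and, by Cauchy--Schwarz, $\sum_r |L_r|^2 \geq |A|^4/|A/A|$. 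Solymosi's geometric key lemma says that for $(x_1, x_2) \in L_{r_i} \times L_{r_{i+1}}$ the sum $x_1 + x_2$ lies on a complex line through the origin whose argument is strictly in the sector $(\arg r_i, \arg r_{i+1})$, and that these sectors are pairwise disjoint across consecutive indices; this yields the classical bound $\sum_i |L_{r_i}||L_{r_{i+1}}| \leq |A+A|^2$.

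To convert this into an estimate involving $|A+A-A|$ I would use the extra-variable idea from \cite{BHRConvexity}: enlarge the count to quadruples $((x_1,x_2), c) \in L_{r_i} \times L_{r_{i+1}} \times A$ and consider the map $((x_1,x_2), c) \mapsto x_1 + x_2 - (c,c) \in (A+A-A)^2$. The gain of $|A|$ from the auxiliary $c$ should be captured through the philosophy that the elements of $A+A-A$ are evenly spaced among $A$: this makes the map have an image of size $\gtrsim |A| \sum_i |L_{r_i}||L_{r_{i+1}}|$ rather than the naive $\sum_i |L_{r_i}||L_{r_{i+1}}|$, producing the factor-$|A|$ improvement over Solymosi's $|A+A|^2$ bound. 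Combined with a dyadic pigeonhole on the sizes $|L_r|$ (as in Solymosi's original argument), this yields the target $|A+A-A| \cdot |A/A| \gg |A|^3$ up to logarithmic factors, which I expect to absorb either by arguing more carefully in the equal-sizes case or by invoking the log-removal technique of \cite{BHRConvexity} used in Theorem \ref{removing logs theorem}.

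The principal obstacle is making the ``$|A|$-gain'' from the variable $c$ truly effective: subtracting $(c,c)$ destroys the sector in which the ratio of coordinates of $x_1+x_2$ was confined, so a naïve count loses a factor of $|A|$ in the fiber and cancels the gain. I anticipate that the resolution uses the decomposition of $\C^2$ into diagonal and anti-diagonal parts: subtraction of $(c,c)$ is invisible on the anti-diagonal, where Solymosi's sectoring applies unchanged, while the diagonal component absorbs $c$ and is counted separately to produce the extra factor of $|A|$. Verifying this decomposition rigorously, together with the pigeonhole bookkeeping needed to equalise the sizes $|L_r|$, will be the main technical work.
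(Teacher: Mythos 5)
Your reduction at the top level is fine (the multiplicative Ruzsa/Pl\"unnecke step $|A/A|\le |AA|^2/|A|$ is legitimate), but the core inequality $|A+A-A|\,|A/A|\gg|A|^3$ over $\C$ is left resting on two steps that do not hold as stated. First, the ``geometric key lemma'' you import from Solymosi does not survive the passage to $\C$. If $x_1=(r_ib_1,b_1)\in L_{r_i}$ and $x_2=(r_{i+1}b_2,b_2)\in L_{r_{i+1}}$, the ratio of the coordinates of $x_1+x_2$ is $\lambda r_i+(1-\lambda)r_{i+1}$ with $\lambda=b_1/(b_1+b_2)$ a \emph{complex} weight, so this point is not confined between $r_i$ and $r_{i+1}$ in any sector sense; it can be essentially anywhere, and consecutive argument-sectors give no disjointness. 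Your preliminary angular pigeonhole only makes $\lambda$ approximately real, and then $A/A$ lies in a thin double sector where ``consecutive in argument'' no longer orders the ratios meaningfully (ratios of very different moduli have nearly equal arguments), so the betweenness driving Solymosi's real $4/3$ argument is simply unavailable. (The complex Solymosi proof cited in this paper, \cite{SolymosiComplexSP}, is the nearest-neighbour argument, not the consecutive-ratio one.)

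Second, the step you yourself flag as the main obstacle is a genuine gap, not bookkeeping. The map $((x_1,x_2),c)\mapsto x_1+x_2-(c,c)$ has fibres of size up to $|A|$ (five unknowns, four equations once the pair of lines is known, and the pair of lines can no longer be read off the image since subtracting the diagonal destroys the sector), so the honest bound is $|A|\sum_i|L_{r_i}||L_{r_{i+1}}|\le |A|\,|A+A-A|^2$: no gain. The proposed diagonal/anti-diagonal fix does not repair this, because the anti-diagonal component of $x_1+x_2-(c,c)$ is $(a_1-b_1)+(a_2-b_2)\in(A-A)+(A-A)$ and the diagonal component is the five-fold combination $a_1+a_2+b_1+b_2-2c$; neither lies in $A+A-A$, so nothing in the decomposition is controlled by $|A+A-A|$, and the claimed image bound $\gg|A|\sum_i|L_{r_i}||L_{r_{i+1}}|$ inside $(A+A-A)^2$ is unsupported. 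For contrast, the paper's proof is short and direct: for each $a$ take a nearest neighbour $b$ and the ball $B_A(a)$, pigeonhole to good triples $(a,b,c)$, and apply the injective map $(a,b,c)\mapsto(a-b,ca,cb)$; Lemma \ref{mainlemmaOtherF} (the ball version of Lemma \ref{mainlemma}) bounds the number of values of $a-b$ by $\ll|A+A-A|/|A|$ and the goodness conditions bound $(ca,cb)$ by $\ll|AA|^2/|A|$, giving $|A|^2\ll|A+A-A||AA|^2|A|^{-2}$ with no logarithms and no Pl\"unnecke step. That is where the ``evenly spaced'' idea actually enters --- in controlling a nearest-neighbour difference --- not in boosting the image of a sum map by an extra variable.
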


The function field $\F_q((t^{-1}))$ is the field of all Laurent series of the form \[ \sum_{i=-\infty}^k \alpha_i t^i, \qquad \text{where } \alpha_i \in \F_q \text{ for all } i. \]
We prove the following.
\begin{thm} \label{maintheoremFF}
For any finite $A \subset \F_q((t^{-1}))$ and any $\epsilon > 0$, we have
\[ |A+A-A|^3 |AA|^4 \gg_\epsilon q^{-2}|A|^{9-\epsilon}. \]
\end{thm}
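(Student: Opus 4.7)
The plan is to adapt the Solymosi-style sorting argument underlying the $\C$ bound in Theorem \ref{maintheoremC} to the non-archimedean valued field $F=\F_q((t^{-1}))$, in the spirit of Bloom and Jones. In place of the Euclidean argument used in $\C$ to totally order the set of ratios $A/A$, one has a natural lexicographic ordering of $F^*$ by (valuation, leading coefficient). The strong triangle inequality $v(x+y)\geq \min(v(x),v(y))$, with equality when $v(x)\neq v(y)$, should play the role of Solymosi's geometric ``disjoint angular sectors'' estimate; the discreteness of the leading-coefficient factor $\F_q^*$ is precisely what forces the $q^{-2}$ loss in the final bound.

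First I would partition $A\times A$ according to the ratio $r=a/b\in A/A$, setting $n_r=|\{(a,b)\in A^2 : a/b=r\}|$ so that $\sum_r n_r = |A|^2$ and $\sum_r n_r^2$ is the multiplicative energy of $A$. After dyadic pigeonholing on $n_r$ (accounting for the $|A|^\epsilon$ loss), I would restrict to a subset $R\subset A/A$ on which $n_r$ is essentially constant, and then further refine $R$ by pigeonholing on the leading coefficient $\lambda(r)\in\F_q^*$ (one factor of $q^{-1}$) and on a suitable truncation of the unit part of $r$ (a second factor of $q^{-1}$). Sorted by valuation, two consecutive ratios $r_i,r_{i+1}$ of this sub-collection give rise to point sets on ``ratio-rays'' in $F^2$ whose pairwise sumsets are separated by valuation strata, thanks to the non-archimedean triangle inequality. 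This should yield an upper bound on $\sum_i n_{r_i}n_{r_{i+1}}$ in terms of $|A+A|^2$, which by Pl\"unnecke--Ruzsa can then be rephrased in terms of $|A+A-A|$ and $|AA|$.

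The crucial extra ingredient—borrowed from \cite{BHRConvexity} and already used in the proof of Theorem \ref{maintheorem}—is that $A+A-A$ is quantitatively evenly distributed among the elements of $A$: if $a,a'\in A$ have few elements of $A+A-A$ between them, then $a$ and $a'$ must themselves be close. This lets one swap a factor of $|A+A|$ for $|A+A-A|$ at the cost of an additional factor of $|AA|$, and iterating the swap should produce the asymmetric exponents $3$ and $4$ on $|A+A-A|$ and $|AA|$. The main obstacle, I expect, is the ``consecutive rays'' step in the non-archimedean setting: unlike $\C^*$, which is totally and continuously ordered by argument, $F^*\cong t^\Z\times\F_q^*\times(1+t^{-1}\F_q[[t^{-1}]])$ has a discrete torsion factor and a pro-$p$ tail, so disjointness of the successor sumsets must be established via careful valuation truncation combined with the two pigeonholes on $\F_q^*$. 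These losses, together with the dyadic decomposition of the $n_r$, are what prevent a clean $|A|^9$ on the right-hand side and instead deliver $q^{-2}|A|^{9-\epsilon}$.
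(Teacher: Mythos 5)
Your plan has a fundamental quantitative gap: a Solymosi-style consecutive-ratio/multiplicative-energy argument, even if every ultrametric technicality were resolved, produces bounds of the shape ``three sets against $|A|^4$'' (this is exactly the strength of Theorem \ref{maintheoremC}), whereas the statement here is ``seven sets against $|A|^{9-\epsilon}$.'' For instance, when $|AA|\approx|A|$ the theorem gives $|A+A-A|\gg_{q,\epsilon}|A|^{5/3-\epsilon}$, which cannot follow from an energy inequality of the form $E^\times(A)\lesssim |A+A|^2$ plus Pl\"unnecke manipulations (that route tops out around exponent $3/2$). The ingredient your proposal omits entirely is the Bloom--Jones chain/separable-set machinery, which is what converts the ultrametric structure into \emph{many-fold} sumset growth: the paper shows $A$ contains a long $A$-chain (Proposition \ref{mainprop}), a chain of length $|\mathcal{C}|$ contains a separable subset $U$ of size $|\mathcal{C}|/q$ (Lemma \ref{lem2}), a separable set satisfies $|kU-kU|\gg_k|U|^{2k}$ (Lemma \ref{lem1}), and combining with Pl\"unnecke, $|kA-kA|\ll |A+A-A|^k/|A|^{k-1}$, and taking $k$-th roots yields the exponent $9-\epsilon$. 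This is also where the $q^{-2}$ and the $\epsilon$ genuinely come from (one factor of $q$ lost in Lemma \ref{lem2}, then squared; a $\log^3|A|$ from a single dyadic pigeonholing on chain length plus the $|A|^{-1/k}$ from the root), not from pigeonholing on $\F_q^*$ or on dyadic levels of $n_r$ as you suggest.

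Two specific steps of your outline would not survive scrutiny. First, the ``consecutive ratios separated by valuation strata'' step has no analogue of Solymosi's angular-sector disjointness: the ratios sharing a valuation and leading coefficient form a coset of $1+t^{-1}\F_q[[t^{-1}]]$, a ball with no linear order compatible with addition, and sums arising from nearby ratios are not confined to disjoint regions; the paper never needs such a statement, working instead with good triples $(a,b,c)$ where $b$ lies in the nearest-neighbour ball $B_A(a)$, the injection $(a,b,c)\mapsto(a-b,ac,bc)$, and the nesting of intersecting balls. Second, your mechanism for introducing $A+A-A$ --- ``swap a factor of $|A+A|$ for $|A+A-A|$ at the cost of a factor of $|AA|$ and iterate'' --- is not an inequality anyone can write down; you correctly identified the relevant idea from \cite{BHRConvexity}, but it is deployed locally via Lemma \ref{mainlemmaOtherF} (few elements of $A+A-A$ in $B_A(a)$ forces the nearest-neighbour difference $a-b$ to take few values), inside the count bounding the good triples of Proposition \ref{mainprop}, and the asymmetric exponents $3$ and $4$ then arise mechanically from squaring $H\approx |A|^4/(q|A+A-A||AA|^2\log^3|A|)$ in the final Pl\"unnecke step rather than from any iterated exchange.
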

Theorem \ref{maintheoremFF} does not extend to function fields where the base field is not finite. Furthermore, the dependence in this result on $q$ is necessary (and cannot be improved), since $\F_q((t^{-1}))$ has small non-trivial subfields. Indeed, setting $A=\F_q$ demonstrates this sharpness.
It is also worth mentioning that the same proof of Theorem \ref{maintheoremFF} holds for finite subsets of any field with nonarchimedean norm and finite residue field. In particular, it holds for finite subsets of the $p$-adic numbers $\Q_p$.

A form of Plunnecke's inequality \cite[Cor 1.5]{KatzShen_Plunnecke} shows that given any set $A$ in some group $G$, there exists $A' \subset A$ with $|A'|\geq |A|/2$ such that 
\[ |-A'+A+A| \ll \frac{|-A'+A|^2}{|A|}. \]
If $A' \subset A \in \C$, then applying Theorem \ref{maintheoremC} to $A'$ and some simple inequalities yields
\begin{equation}\label{plun_C}
|A-A|^2 |AA|^2 \gg |A|^5,
\end{equation}
which matches Solymosi's bound \cite{SolymosiComplexSP} (which has since been improved). Similarly, if $A' \subset A \in \F_q((t^{-1}))$, then applying Theorem \ref{maintheoremFF} to $A'$ yields 
\begin{equation}\label{plun_FF}
|A-A|^3 |AA|^2 \gg_\epsilon q^{-1}|A|^{6-\epsilon}.
\end{equation}
This matches Bloom and Jones' bound in \cite{BloomJones}, which is the best known sum-product bound in function fields with finite residue field.
Unfortunately, the bounds \eqref{plun_C} and \eqref{plun_FF} to not follow from our theorems if $A-A$ is replaced with $A+A$.

It is in the few products, many sums framework that Theorems \ref{maintheoremC} and \ref{maintheoremFF} are most relevant. In studying sum-product phenomena, it is often instructive to find bounds for $|A+A|$ given that $|AA|$ is small. In other words, given that there are \emph{few products}, we show that there are \emph{many sums}. This problem has been studied in \cite{BushCroot, Konyagin_h-fold,  MurphyRudnevShkredovShteinikov}.

In particular, we are addressing the few products, many $3$-fold sums problem. For example, if we know that $|AA| \approx |A|$, then for $A \subset \C$ we have
\[ |A+A-A| \gg |A|^{2} \]
and for $A \subset \F_q((t^{-1}))$ we have
\[ |A+A-A| \gg_{q,\epsilon} |A|^{5/3-\epsilon}. \]
The proofs of Theorems \ref{maintheoremC} and \ref{maintheoremFF} are both inspired by the combination of techniques used to prove Theorem \ref{maintheorem}.

\subsection{Structure of this paper}
In \S\ref{prelims section}, we present some essential preliminaries and a proof of Theorem \ref{maintheorem}, which forms the base step for the induction proof in the following section. 
In \S\ref{RemovingLogsSection}, we prove Theorem \ref{removing logs theorem} using a variation of the induction proof in \cite{higher_convexity}. 
\S\ref{manyinside section} is devoted to proving Theorem \ref{thm many A inside} through the auxiliary result Proposition \ref{thm many A inside induction}. We finally demonstrate that Theorem \ref{Thm towards (A-A)^k} is an easy corollary. 

The proofs of Theorems \ref{maintheoremC} and \ref{maintheoremFF} are almost identical to the corresponding proofs in \cite{SolymosiComplexSP} and \cite{BloomJones} respectively. Simple modifications using a version of Lemma \ref{mainlemma} produce the improvements. 
For this reason, both proofs appear only in Appendix \ref{FPMS section}, which is the only section in which we do not work exclusively over $\R$.

\section{Preliminaries}\label{prelims section}

\subsection{Convex functions and squeezing elements}
A simple and clever \emph{squeezing} approach for proving sumset bounds for convex sets was given in \cite{RuzsaShakanSolymosiSzemeredi}, providing a sharp lower bound
\[ |B+B-B| \gg |B|^2 \]
where $B$ is a convex set. This approach was significantly extended in \cite{higher_convexity}, providing estimates for longer sums of more convex sets and also the images $f(A)$ of sets with small additive doubling under $k$-convex functions.

The basic idea is the following.
Recall that a set $B = \{b_1 < \dots < b_N\}$ is convex if the adjacent differences between elements is increasing
\[ b_2-b_1 < b_3 - b_2 < \dots < b_N - b_{N-1}. \]
It follows that if $i$ is fixed, then for any $j < i$ the terms
\[ b_{i} + (b_{j}-b_{j-1}) \in B+B-B \]
are all unique and lie in $(b_i,b_{i+1})$.
We may apply this for any $i$ to obtain
$\sum_{i=1}^N (i-2) \approx |B|^2$
elements of $|B+B-B|$.

More can be said if we are looking at the images of convex functions. 
Given a function $f$, define its $d$-derivative by
\[ \Delta_d f(x) := f(x+d)-f(x). \]

\begin{lem}\label{convexity lemma}
If $f$ is a $k$-convex function, then for any $d$, $\Delta_d f$ is a $(k-1)$-convex function.
\end{lem}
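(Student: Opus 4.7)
The plan is to exploit the fact that the finite-difference operator $\Delta_d$ commutes with differentiation. A direct calculation shows that for every $j$ with $f^{(j)}$ defined,
\[ (\Delta_d f)^{(j)}(x) = f^{(j)}(x+d) - f^{(j)}(x). \]
Since $f$ is $k$-convex, the derivatives $f^{(0)}, \ldots, f^{(k)}$ all exist, so $\Delta_d f$ belongs to $C^{k-1}(\R)$ and the derivatives $(\Delta_d f)^{(0)}, \ldots, (\Delta_d f)^{(k-1)}$ are well-defined.

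To conclude that $\Delta_d f$ is $(k-1)$-convex, I need only check that each of these derivatives is strictly monotone. Fixing $j \in \{0, 1, \ldots, k-1\}$ and differentiating once more, I get
\[ \bigl((\Delta_d f)^{(j)}\bigr)'(x) = f^{(j+1)}(x+d) - f^{(j+1)}(x). \]
Because $j+1 \leq k$, the hypothesis of $k$-convexity guarantees that $f^{(j+1)}$ is strictly increasing. For $d>0$ this forces the right-hand side to be strictly positive everywhere, making $(\Delta_d f)^{(j)}$ strictly increasing; for $d<0$ the same reasoning gives strict decrease. Either way $(\Delta_d f)^{(j)}$ is strictly monotone, and applying this across all valid $j$ yields the claim.

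There is essentially no obstacle in this proof: the only subtlety is the degenerate case $d = 0$, which yields $\Delta_0 f \equiv 0$ and fails strict monotonicity, so this case must be tacitly excluded. This is consistent with the way $\Delta_d$ is used in the sequel, where $d$ always represents a nonzero gap between two elements of the set under consideration.
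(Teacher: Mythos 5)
Your proof is correct and rests on the same key fact as the paper's, namely that $\Delta_d$ commutes with differentiation, so that $(\Delta_d f)^{(j)} = \Delta_d\bigl(f^{(j)}\bigr)$; the paper packages this as an induction on $k$ (with the base case handled via $\Delta_d f(x) = \int_x^{x+d} f'(y)\,dy$), while you simply unroll it and deduce strict monotonicity of each $(\Delta_d f)^{(j)}$, $0 \leq j \leq k-1$, directly from the sign of its derivative. Your caveat about $d = 0$ is apt but harmless: the paper's statement and proof likewise tacitly assume $d \neq 0$, as is always the case where the lemma is applied.
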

\begin{proof}
We use induction on $k$. Suppose $f$ is $1$-convex.
We have 
\[ \Delta_d f(x) := f(x+d)-f(x) = \int_x^{x+d} f'(y)dy. \]
Since $f'$ is monotone, it follows that $\Delta_d f$ is also monotone, and hence $0$-convex. 

Next assume the statement holds for $(k-1)$-convex functions. 
Let $f$ be a $k$-convex function.
By definition, this implies that $f'$ is a $(k-1)$-convex function.
The induction hypothesis implies that $\Delta_d(f')$ is a $(k-2)$-convex function. But since $\Delta_d(f') = (\Delta_d f)'$, it follows that $\Delta_d f$ is $(k-1)$-convex, completing the induction. 
\end{proof}

If $f$ is convex, $A = \{a_1 < \dots < a_N\}$ and $d >0$, then Lemma \ref{convexity lemma} implies that $\Delta_d f$ is increasing, meaning that
\[ f(a_1 + d) - f(a_1) < \dots < f(a_N + d) - f(a_N).\]
Consequently if $i$ is fixed, then for any $j<i$ the terms
\[f(a_i) + f(a_j+d) - f(a_j)\]
are all different and lie in $(f(a_i),f(a_i + d))$.
We usually take $d$ to be at most the smallest difference between adjacent elements of $A$ to ensure that these intervals are disjoint.
These observations are summarised in the following.

\begin{lem}[The squeezing lemma]\label{lemma squeeze}
Let $f$ be a convex function and $d > 0$. Let $s$ be a real number and $S_-$ a set of real numbers smaller than $s$. Then
\[ f(s) + \Delta_d f(S_-) \subset (f(s),f(s+d)). \ \]
\end{lem}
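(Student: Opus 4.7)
The plan is to verify the two inclusion inequalities separately by unpacking the definition of $\Delta_d f$ and invoking monotonicity, using the preceding Lemma \ref{convexity lemma} for the harder direction. Let $s' \in S_-$, so $s' < s$, and consider the element
\[ f(s) + \Delta_d f(s') = f(s) + f(s'+d) - f(s'). \]
I need to show this lies strictly between $f(s)$ and $f(s+d)$.

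For the lower bound, I would observe that $f(s)+\Delta_d f(s') > f(s)$ is equivalent to $f(s'+d) > f(s')$, which is immediate from the standing convention that $f$ itself is monotone increasing and $d > 0$. This step is essentially trivial.

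For the upper bound, the inequality $f(s) + f(s'+d) - f(s') < f(s+d)$ rearranges as $\Delta_d f(s') < \Delta_d f(s)$. This is where Lemma \ref{convexity lemma} enters: since $f$ is $1$-convex, the lemma applied with $k=1$ guarantees that $\Delta_d f$ is $0$-convex, i.e.\ strictly monotone. Because $f'$ is assumed increasing, the formula $\Delta_d f(x) = \int_x^{x+d} f'(y)\,dy$ used in that proof shows $\Delta_d f$ is strictly increasing. Combining this with $s' < s$ yields $\Delta_d f(s') < \Delta_d f(s)$, as required.

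There is really no serious obstacle here; the only tiny subtlety is being careful that the inclusion is in the open interval, which is why we need the strict monotonicity of $\Delta_d f$ (and not merely non-strict) — and this is exactly what the hypothesis that $f'$ is strictly monotone provides. Putting the two bounds together gives $f(s) + \Delta_d f(s') \in (f(s), f(s+d))$ for every $s' \in S_-$, which is the desired inclusion.
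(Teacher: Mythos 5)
Your proof is correct and follows essentially the same route as the paper, which establishes the lemma via the observations immediately preceding it: $\Delta_d f$ is increasing by Lemma \ref{convexity lemma}, giving the upper bound $\Delta_d f(s') < \Delta_d f(s) = f(s+d)-f(s)$, while the lower bound comes from $f$ itself being increasing. Your handling of strictness (via $f'$ strictly monotone) is exactly the point the paper relies on implicitly.
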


\subsection{The basic argument}
The following method will be extended to different fields ($\C$ and $\F_q((t^{-1}))$) in Appendix \ref{FPMS section}, but ideas herein will also be employed in $\S$\ref{RemovingLogsSection}.

We will also introduce the following notation: if $a'<a$, then
\[ n_A(a',a) := (A+A-A)\cap (a',a]. \]
Taking $a,a' \in A$, the quantity $n_A(a',a)$ tells us about how $A+A-A$ is distributed among the elements of $A$.  
Intuitively we would think that wide intervals contain many elements of $A+A-A$. This intuition is quantified by the following Lemma, which also appears in \cite{BHRConvexity}. 

\begin{lem}\label{mainlemma}
Let $D:=\{d_1<d_2\dots<d_{|D|}\}$ be the positive differences in $A-A$. If $a,a' \in A$ with $a'<a$ and $n_A(a',a)\leq Z$, then $a-a' \leq d_Z$.
\end{lem}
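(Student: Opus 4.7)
The plan is to argue by contradiction: assume $a - a' > d_Z$ and construct $Z+1$ distinct elements of $(A+A-A) \cap (a',a]$, contradicting the hypothesis $n_A(a',a) \leq Z$.

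First I would use the $Z$ smallest positive differences $d_1 < d_2 < \dots < d_Z$ to manufacture $Z$ elements of $A+A-A$ lying strictly between $a'$ and $a$. For each $i = 1,\dots,Z$, write $d_i = x_i - y_i$ with $x_i, y_i \in A$, and set
\[ e_i := a' + d_i = a' + x_i - y_i \in A + A - A. \]
Since the $d_i$ are distinct positive numbers, the $e_i$ are $Z$ distinct real numbers, each strictly greater than $a'$. The assumption $a - a' > d_Z \geq d_i$ forces $e_i < a$, so in fact $e_i \in (a', a)$.

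The second step is to notice one more element: $a$ itself lies in $A \subset A + A - A$ (since $a = a + a - a$), and of course $a \in (a', a]$. Combining this with the $Z$ elements $e_1, \dots, e_Z$ already constructed, we obtain at least $Z+1$ distinct elements of $(A+A-A) \cap (a', a]$, contradicting $n_A(a', a) \leq Z$. The conclusion $a - a' \leq d_Z$ follows.

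I do not anticipate any real obstacle here: the whole argument is a clean counting of translates of differences, and the only subtle point is remembering to include the endpoint $a$ to boost the count from $Z$ to $Z+1$. Strictness of the bound $e_i < a$ uses the strict inequality $a - a' > d_Z$ in the contradiction hypothesis, which is exactly what we need to conclude $a - a' \leq d_Z$ (rather than the weaker $<$).
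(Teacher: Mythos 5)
Your proof is correct and is essentially the paper's argument: both proceed by contradiction and translate the smallest positive differences by $a'$ to produce too many elements of $A+A-A$ in $(a',a]$. The only cosmetic difference is that the paper observes $a-a'=d_Y$ for some $Y>Z$ (since $a-a'$ is itself a positive difference) and gets $Y>Z$ elements from the translates $a'+d_i$, $i=1,\dots,Y$, whereas you use only $d_1,\dots,d_Z$ and add the endpoint $a=a+a-a$ separately to reach $Z+1$.
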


In other words, if there are at most $Z$ elements of $A+A-A$ in $(a',a]$, then $a-a'$ must be among the $Z$ smallest positive differences in $A-A$.

\begin{proof}
If not then $a-a' = d_Y$ where $Y>Z$. But then
\[ a'<a'+d_i \leq a, \]
for $i = 1,\dots,Y$. Thus there are at least $Y>Z$ elements of $A+A-A$ in $(a',a]$, contradicting that $n_A(a',a) \leq Z$.
\end{proof}

We are now equipped to prove Theorem $\ref{maintheorem}$.
\begin{proof}[Proof of Theorem \ref{maintheorem}]
Let $A := \{a_1 < \dots < a_{|A|}\}$.
We say that $a_i$ is \emph{good} if 
\[ n_A(a_i,a_{i+1}) \ll \frac{|A+A-A|}{|A|} \qquad \text{and} \qquad n_{f(A)}(f(a_i),f(a_{i+1})) \ll \frac{|f(A) + f(A) - f(A)|}{|A|}. \]
Since 
\[\sum_{i=1}^{|A|-1} n_A(a_i,a_{i+1}) \leq |A+A-A| \quad \text{and} \quad \sum_{i=1}^{|A|-1} n_{f(A)}(f(a_i),f(a_{i+1})) \leq |f(A)+f(A)-f(A)|,\]
by the pigeonhole principle, there is a set $A'$ with $|A'| \gg |A|$ such that each element of $A'$ is good.

Now consider the map
\[ \Psi: a_i \mapsto (a_{i+1}-a_i, f(a_{i+1}) - f(a_i)). \]
By the mean value theorem, there exits a sequence $\{c_i\}$ where $c_i \in (a_i,a_{i+1})$
and 
\[ \frac{f(a_{i+1})-f(a_i)}{a_{i+1}-a_i} = f'(c_i). \]
Once $f(a_{i+1})-f(a_i)$ and $a_{i+1}-a_i$ are fixed, $c_i$ is known uniquely since $f'$ is strictly monotone. Thus $a_i$ is also known uniquely, and $\Psi$ is injective.

Restrict the domain of $\Psi$ to $A'$.  
Since $\Psi$ is injective, the size of its domain $A'$ equals the size of its image $\Psi(A')$, proving that
\begin{equation}\label{image equation}
|A| \ll |A'| = |\Psi(A')|.
\end{equation}
A suitable upper bound for $|\Psi(A')|$ will complete the proof. 

Since each $a_i \in A'$ is good it satisfies 
\[ n_{A}(a_i,a_{i+1}) \ll \frac{|A+A-A|}{|A|}. \]
Lemma \ref{mainlemma} shows that $a_{i+1}- a_i$ is among the smallest $\frac{|A+A-A|}{|A|}$ positive elements in $A-A$ and therefore, there are $\ll \frac{|A+A-A|}{|A|}$ values it can take. By an identical argument there are $\ll \frac{|f(A)+f(A)-f(A)|}{|A|}$ values $f(a_{i+1})-f(a_i)$ can take.

This proves that $|\Psi(A')| \ll |A+A-A||f(A)+f(A)-f(A)||A|^{-2}$.
It follows from \eqref{image equation} that
\[ |A| \ll |A+A-A||f(A)+f(A)-f(A)||A|^{-2}, \]
and rearranging completes the proof.
\end{proof}

\begin{rmk}
In \S\ref{RemovingLogsSection}, we will claim that Theorem \ref{maintheorem} is the base case for the induction proof of Theorem \ref{removing logs theorem}.
In fact, we will need the slightly stronger statement that $f(A)+f(A)-f(A)$ contains $\gg |A|^3 |A+A-A|^{-1}$ elements in $(\min(f(A)),\max(f(A)))$. 

To see that this is still true, note that given any triple $(x_1,x_2,x_3) \in f(A)^3$ with $x_1\leq x_2 \leq x_3$, we have $x_1 + x_3 - x_2 \in [\min f(A),\max f(A)]$. Thus for large $|A|$, a little less than one third of $f(A)+f(A)-f(A)$ must lie in $(\min f(A),\max f(A))$.
\end{rmk}

\section{Proof of Theorem \ref{removing logs theorem}}\label{RemovingLogsSection}

\begin{proof}[Proof of Theorem \ref{removing logs theorem}]
The proof will be by induction on $k$. 
The actual statement we will prove is the slightly stronger statement that all sums produced lie in the interval $(\min f(A),\max f(A))$.
So the base step (which is proved in Theorem \ref{maintheorem}) states that $f(A)+f(A)-f(A)$ contains $\gg |A|^3 |A+A-A|^{-1}$ elements in $(\min f(A),\max f(A))$.

As in previous proofs, we say that $a_i\in A$ is \emph{good} if 
\[ n_A(a_i,a_{i+1}) \ll \frac{|A+A-A|}{|A|}. \]
By the pigeonhole principle, a positive proportion of all $a_i \in A$ are good. We henceforth restrict our attention to the good $a_i$.
Consider the differences $a_{i+1}-a_i$ and let $H$ be the set of all such differences.
Since we are only considering good values of $a_i$, Lemma \ref{mainlemma} implies that $|H| \ll |A+A-A||A|^{-1}$. For each $h \in H$, define 
\[ A_{h} = \{a_i: a_{i+1}-a_i = h\}. \]
We furthermore know that $\sum_{h\in H} |A_{h}| \approx |A|$.

If $A_{h} = \{ a_{e_1}< \dots <a_{e_L}\}$
then let $A_h^i = \{ a_{e_1}< \dots <a_{e_{i-1}}\}$ be the truncation taking only the smallest $i-1$ elements of $A_h$.
For any $a_{e_i} \in A_{h}$, the squeezing lemma (Lemma \ref{lemma squeeze}) implies that
\[ f(a_{e_i}) + \Delta_{h}f(A_h^i) \subset (f(a_{e_i}),f(a_{e_i+1})). \]
Since $f$ is a $k$-convex function, Lemma \ref{convexity lemma} proves that $g_i := f(a_{e_i}) + \Delta_{h}f$ is a $(k-1)$-convex function, and we have
\[ g_i(A_{h}^i) \subset (f(a_{e_i}),f(a_{e_i+1})). \]

It follows from the induction hypothesis that 
\[ 2^{k-1}g(A_{h}^i) - (2^{k-1}-1)g(A_{h}^i) \subset 2^{k}f(A) - (2^{k}-1)f(A) \]
contains $\gg \frac{|A_{h}^i|^{2^{k}-1}}{|A_{h}^i+A_{h}^i-A_{h}^i|^{2^{k}-k-1}}$ elements in $(f(a_{e_i}),f(a_{e_i+1}))$.
This argument can be run for every element of $A_{h}$, and then also for each $h \in H$ to obtain
\begin{align}
    |2^k f(A) - (2^{k-1}-1) f(A)| &\gg \sum_{h \in H} \sum_{i=1}^{|A_h|} \frac{|A_{h}^i|^{2^{k}-1}}{|A_{h}^i+A_{h}^i-A_{h}^i|^{2^{k}-k-1}} \nonumber\\
    & \gg \frac{1}{|A+A-A|^{2^{k}-k-1}}\cdot \sum_{h \in H} |A_{h}|^{2^{k}}. \label{big approx removing logs}
\end{align}
Above we have used the trivial facts that $|A+A-A| \gg |A_{h}^i+A_{h}^i - A_{h}^i|$ and 
$|A_h^i| = i-1$.
Now by H\"older's inequality, we have 
\begin{equation}\label{Holders removing logs}
\sum_{h \in H} |A_{h}|^{2^k} \cdot |H|^{2^k-1} \geq \left(\sum_{h\in H} |A_{h}|\right)^{2^k} \approx |A|^{2^k}.
\end{equation}
Recalling that $|H| \ll |A+A-A||A|^{-1}$, \eqref{big approx removing logs} and \eqref{Holders removing logs} yield the desired
\[ |2^k f(A) - (2^{k-1}-1) f(A)| \gg  \frac{|A|^{2^{k+1}-1}}{|A+A-A|^{2^{k+1}-k-2}}, \]
with all constructed elements lying in $(\min f(A),\max f(A))$.
\end{proof}

\begin{rmk}
In \cite{higher_convexity}, Lemma 3.1 is used to find a consecutive difference $h\in A-A$ with many realisations, and this is done by dyadic pigeonholing. 

Instead we have found a large set of consecutive pairs $(a_i,a_{i+1})$ which don't have many elements of $A+A-A$ in between them. By Lemma \ref{mainlemma}, there are few possible values that $a_{i+1}-a_i$ can take, and therefore the pigeonhole principle proves that some of these differences must be reaslised many times. 
This approach avoids the logarithmic factors intrinsic to a dyadic pigeonholing argument. 
\end{rmk}

\section{Proof of Theorem \ref{thm many A inside}}\label{manyinside section}

In proving Theorem \ref{thm many A inside}, we will actually prove the following stronger but more cumbersome result, because it makes the induction step more manageable.

\begin{prop}\label{thm many A inside induction}
Let $A = \{a_1 < \dots < a_N\}$ be any set or reals, $k$ be a positive integer and $f$ be a $k$-convex function. Also let $d >0$ be such that
\begin{equation}\label{d condition}
kd \leq a_i-a_j \qquad \text{for all} \qquad j<i.
\end{equation}
For $i = 1,\dots, N$, set 
\[ P_{k,i} = \{ a_i, a_i + d, \dots, a_i + kd \} \qquad \text{and} \qquad S_{k,i} = \cup_{j=1}^{i-1} P_{k,j}. \]
Then the set
\[ 2^kf(S_{k,N}) -(2^k-1)f(S_{k,N}) \]
contains $\gg_k |A|^{k+1}$ elements in $(\min f(A),\max f(A))$.
\end{prop}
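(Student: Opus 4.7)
The plan is to prove Proposition \ref{thm many A inside induction} by induction on $k$. The engine is Lemma \ref{lemma squeeze} together with the observation (Lemma \ref{convexity lemma}) that the $d$-derivative $\Delta_d f$ of a $k$-convex function is $(k-1)$-convex. The strategy is to deposit, for each $i$, many elements of $2^k f(S_{k,N}) - (2^k-1) f(S_{k,N})$ into the disjoint interval $(f(a_i), f(a_i+d))$; the hypothesis $kd \leq a_{i+1} - a_i$ ensures these $N-1$ intervals are pairwise disjoint and lie inside $(\min f(A), \max f(A))$.

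For the base case $k=1$, I would apply Lemma \ref{lemma squeeze} for each $i \in \{2,\dots,N-1\}$ with $s = a_i$ and $S_- = \{a_1,\dots,a_{i-1}\}$. This drops $i-1$ distinct elements $f(a_i) + f(a_j+d) - f(a_j) \in 2f(S_{1,N}) - f(S_{1,N})$ into $(f(a_i), f(a_i+d))$, and summing over $i$ yields $\sum_{i=2}^{N-1}(i-1) \gg N^2$ elements, settling this case.

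For the inductive step, I would set $g := \Delta_d f$ and, for each $i$, apply the induction hypothesis to the set $A_i := \{a_1,\dots,a_{i-1}\}$ with function $g$ and shift $d'=d$ (the required spacing $(k-1)d \leq a_l - a_m$ for $m<l$ in $A_i$ is inherited from $kd \leq a_l - a_m$). Writing $T_i := \bigcup_{j=1}^{i-2}\{a_j + md : 0 \leq m \leq k-1\}$ for the analogue of $S_{k-1,i-1}$ built over $A_i$, the IH produces $\gg_{k-1} (i-1)^k$ elements
\[ z \in 2^{k-1} g(T_i) - (2^{k-1}-1) g(T_i) \]
lying in $(g(a_1), g(a_{i-1}))$. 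Since $g$ is monotone and $a_{i-1} < a_i$, every such $z$ satisfies $0 < z < g(a_i) = f(a_i+d) - f(a_i)$, so $f(a_i) + z \in (f(a_i), f(a_i+d))$. Expanding $z$ as a signed sum of values $g(s) = f(s+d) - f(s)$ and prepending the leading $f(a_i)$ exhibits $f(a_i)+z$ as a signed $f$-sum with exactly $2^k$ positive and $2^k-1$ negative terms, hence a genuine element of $2^k f(S_{k,N}) - (2^k-1) f(S_{k,N})$, provided all arguments $s$ and $s+d$ lie in $S_{k,N}$. Summing over $i \in \{3,\dots,N-1\}$ and using disjointness of the deposit intervals gives $\sum_{i=3}^{N-1}(i-1)^k \gg_k N^{k+1}$ elements in $(f(a_1), f(a_N))$.

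I do not anticipate a substantial obstacle; the argument is a cleaner, arithmetic-progression-structured variant of the induction behind Theorem \ref{removing logs theorem}, with no need for dyadic pigeonholing or a doubling parameter $K$. The only delicate point is the bookkeeping inclusion $T_i \cup (T_i + d) \subset S_{k,N}$: elements of $T_i$ have the form $a_j + md$ with $j \leq i-2$ and $0 \leq m \leq k-1$, so a shift by $d$ produces $a_j + (m+1)d$ with $m+1 \leq k$, still inside $P_{k,j} \subset S_{k,N}$. This clean inclusion, together with the spacing condition $kd \leq a_i - a_j$, is precisely why the stronger auxiliary Proposition \ref{thm many A inside induction} is more amenable to induction than Theorem \ref{thm many A inside} itself.
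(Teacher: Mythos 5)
Your proposal is correct and follows essentially the same route as the paper: induction on $k$, with the squeezing mechanism (via $\Delta_d f$ being $(k-1)$-convex and increasing) depositing the inductively produced elements into the disjoint intervals $(f(a_i), f(a_i+d))$ and summing $\sum_i (i-1)^k \gg_k N^{k+1}$. The only differences are cosmetic: the paper applies the induction hypothesis to the shifted functions $g_i = f(a_i) + \Delta_d f$ rather than adding $f(a_i)$ afterwards, and you spell out the sign-count and the inclusion $T_i \cup (T_i + d) \subset S_{k,N}$ that the paper leaves implicit.
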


\begin{proof}\label{thmA-A}
The proof is by induction on $k$. 
We begin with the base step. Let $d < a_i-a_j$ for all $j<i$. We have
\[ S_{1,N} = \{ a_1, a_1+d, a_2, a_2 + d, \dots, a_{N-1}, a_{N-1} +d \}. \]
Since $f$ is a convex function it follows that 
\[ f(a_1+d)-f(a_1) < \dots < f(a_{N-1}+d) - f(a_{N-1}) \]
and consequently if $i$ is fixed, then for any $j<i$ the terms
\[f(a_i) + f(a_j+d) - f(a_j)\]
are all different and lie in the interval $I_i = (f(a_i),f(a_i + d))$. This produces $i-1$ elements of $f(S_1) + f(S_1) - f(S_1)$. 
Since $d < a_i-a_j$ for all $j<i$,
the intervals $I_i$ are disjoint.
Apply this argument for $i = 1, \dots , N-1$, producing
\[ \sum_{i=1}^{N-1} (i-1) \approx |A|^2 \]
elements of $f(S_1) + f(S_1) - f(S_1)$ lying in $(\min f(A), \max f(A))$, thus completing the base step.

We proceed to the induction. 
Note that \eqref{d condition} guarantees that the intervals
\[ (a_i, a_i+kd) \qquad 1\leq i \leq N \]
spanned by the sets $P_{k,i}$ are all disjoint.
Using the squeezing lemma (Lemma \ref{lemma squeeze}) and \eqref{d condition}, we have 
\[ f(a_i) + \Delta_d f(S_{k-1,i}) \subset (f(a_i), f(a_{i}+d)) \subset (f(a_i),f(a_{i+1})). \]
Since $f$ is $k$-convex, the new functions $g_i:= f(a_i) + \Delta_d f$ are all $(k-1)$-convex by Lemma \ref{convexity lemma}, and also
\[ g_i(S_{k-1,i}) \subset (f(a_i),f(a_{i+1})). \]
From \eqref{d condition}, the inequality $(k-1)d \leq a_i-a_j$ trivially holds for all $j<i$, so the induction hypothesis can be applied to $A_i := \{a_1, \dots , a_{i}\}$, showing that the set
\[ 2^{k-1}g_i(S_{k-1,i}) - (2^{k-1}-1)g_i(S_{k-1,i}) \subset 2^k f(S_{k,N}) - (2^k-1) f(S_{k,N}) \]
contains $\gg |A_i|^{k} = (i-1)^{k}$ elements in $(f(a_i),f(a_{i+1}))$.
Applying this for each function $g_i$ we get
\[ |2^k f(S_{k,N}) - (2^k-1) f(S_{k,N})| \gg \sum_{i=1}^{N} (i-1)^k \approx_k |A|^{k+1}, \]
and all constructed elements lie in $(\min f(A),\max f(A))$, closing the induction.
\end{proof}

\begin{proof}[Proof of Theorem \ref{thm many A inside}]
Given $A = \{a_1 < \dots < a_N\}$, let $b,b' \in A$ be such that $d_0 = b-b'$ is the smallest positive element of $A-A$. 

We start by proving the case where $k= 2s$ is even. 
We set  
\[ A' = \{ a_k - sd_0, \dots, a_{kM}-sd_0 \}, \]
where $M = \lfloor N/k \rfloor$.
Now apply Proposition \ref{thm many A inside induction} to $A'$ with $d = d_0$.
Since $d_0 \in A-A$ we get $S_{k,M} \subset (s+1)A-sA$, and the result follows.

If $k = 2s-1$ is odd, then instead using 
\[ A' = \{ a_k - sd_0 -b', \dots, a_{kM}-sd_0-b' \} \]
completes the proof.
\end{proof}

We now prove Theorem \ref{Thm towards (A-A)^k}.
\begin{proof}[Proof of Theorem \ref{Thm towards (A-A)^k}]
Set $f(x) = \log(x)$ in the case $k = 2s-1$ of Theorem \ref{thm many A inside}. Using a crude upper bound on the size of the quotient set, we get that for any natural number $s$, 
\[ |(sA-sA)^{(2^k)}|^2 \gg \left|\frac{(sA-sA)^{(2^k)}}{(sA-sA)^{(2^k-1)}}\right| \gg |A|^{2s}. \]
Taking square roots and setting $m(s) = 2^k = 2^{2s-1}$ completes the proof.
\end{proof}

One can think of a $k$-convex set $A$ as $f([N])$ where $f$ is a $k$-convex function. Then \cite[Theorem 1.3]{higher_convexity} shows that 
\[ |2^kA - (2^k-1)A| \gg_k |A|^{k+1}. \]
This is also a corollary of Proposition \ref{thm many A inside induction} by setting $A$ to be an arithmetic progression. In fact, Proposition \ref{thm many A inside induction} is designed to synthesise the important properties of $[N]$ which make $f([N])$ grow under many sums and differences.

\section*{Acknowledgements}
I would like to thank Misha Rudnev for useful input and suggesting some of these problems. 
I would also like to thank Tom Bloom, Oleksiy Klurman, Sam Mansfield, Akshat Mudgal, Jonathan Passant and Oliver Roche-Newton for useful conversations.

\bibliographystyle{plain}
\bibliography{bibliography}

\appendix

\section{Proof of Theorems \ref{maintheoremC} and \ref{maintheoremFF}}\label{FPMS section}

Let $(\F,\|\cdot\|)$ be a field with a norm.
In this section, we will use extensively the notation
\[ B(a,r) = \{ x \in \F: \|x-a\| \leq r \}. \]
That is, $B(a,r)$ is the ball around $a$ with radius $r$.

We will need a version of Lemma \ref{mainlemma} which is applicable in $\C$ and in $\F_q((t^{-1}))$.

\begin{lem}\label{mainlemmaOtherF}
Let $(\F,\|\cdot\|)$ be a field with a norm, and let $A$ be a finite subset of $\F$.
Write $D := A-A = \{0\} \cup \{ d_1,\dots, d_{|D|} \}$ such that the norms of the $d_i$ are non-decreasing.
If $a,a' \in A$ and
\[ |(A+A-A)\cap B(a,\|a-a'\|)| \leq Z, \]
then $\|a-a'\| \leq d_Z$.
\end{lem}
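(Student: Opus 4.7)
The plan is to prove this by contrapositive, in close parallel with the proof of Lemma \ref{mainlemma}. Suppose for contradiction that $\|a-a'\| > \|d_Z\|$. The goal is to exhibit at least $Z+1$ distinct elements of $(A+A-A) \cap B(a,\|a-a'\|)$, which contradicts the hypothesis that this intersection has size at most $Z$.

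First, I would consider the $Z$ translates $a+d_1,\, a+d_2,\, \ldots,\, a+d_Z$. Each of these lies in $A+(A-A)=A+A-A$ because $a\in A$ and each $d_i\in A-A$. Since the $d_i$ are distinct elements of $D$, these $Z$ translates are distinct. Moreover, as the $d_i$ are ordered by non-decreasing norm, we have $\|(a+d_i)-a\|=\|d_i\|\leq \|d_Z\| <\|a-a'\|$ for every $i\in\{1,\ldots,Z\}$, so each translate lies in $B(a,\|a-a'\|)$. To obtain the extra element, I would observe that $a=a+a-a\in A+A-A$ also lies in $B(a,\|a-a'\|)$ (it is at distance zero from itself), and is distinct from each $a+d_i$ because $d_i\neq 0$ by the way $D$ was written. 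Thus we have produced $Z+1$ distinct elements of $(A+A-A)\cap B(a,\|a-a'\|)$, yielding the desired contradiction.

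There is essentially no obstacle here; the content of the lemma is in the choice of the right normed-field analogue of the half-open interval $(a',a]$, namely the closed ball $B(a,\|a-a'\|)$, and this analogue is already baked into the statement. The only small point worth checking is that every step of the real-line argument survives: we use only that $D$ is a finite set of distinct points, that $d_i\neq 0$ for $i\geq 1$, and that $\|\cdot\|$ is a field norm so that distinct $d_i$ produce distinct $a+d_i$. These all hold in any normed field, and in particular in $\C$ with the usual modulus and in $\F_q((t^{-1}))$ with the nonarchimedean norm, which is exactly the generality needed to drive the proofs of Theorems \ref{maintheoremC} and \ref{maintheoremFF} in Appendix \ref{FPMS section}.
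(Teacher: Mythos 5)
Your proof is correct and follows essentially the same route as the paper: argue by contradiction/contrapositive and translate $a$ by the small differences $d_i$ to force too many elements of $A+A-A$ into the ball $B(a,\|a-a'\|)$. The only cosmetic difference is that you count $a+d_1,\dots,a+d_Z$ together with $a$ itself to reach $Z+1$ elements, whereas the paper writes $a-a'=d_Y$ with $Y>Z$ and counts the $Y$ elements $a+d_1,\dots,a+d_Y$; the substance is identical.
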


The proof is almost identical to the proof of Lemma \ref{mainlemma}.

\begin{proof}
If not then $\|a-a'\| = d_Y$ where $Y>Z$, whence
\[ a+d_i \in B(a,\|a-a'\|), \]
for $i = 1,\dots,Y$. 
This produces $Y>Z$ elements of $A+A-A$ in $B(a,\|a-a'\|)$, a contradiction.
\end{proof}

Importantly Lemma \ref{mainlemmaOtherF} can be applied to finite subsets of $\C$ with the usual complex modulus as the norm, and $\F_q((t^{-1}))$ with $\|x\| = q^{\deg x}$ as the norm.

\subsection{Sum-product type result in $\C$}
We prove Theorem \ref{maintheoremC} following a method of Solymosi \cite{SolymosiComplexSP} incorporate Lemma \ref{mainlemmaOtherF} to obtain an improvement.
\begin{proof}[Proof of Theorem \ref{maintheoremC}]
For notation, let $B_A(a):= B(a,|a-b|)$ where $b$ is a nearest neighbour of $a$ (according to the standard modulus function in $\C$).
We will say that $(a,b,c) \in A^3$ is \emph{good} if:
\begin{align}
    &b \in B_A(a)\backslash \{a\}, \nonumber \\ 
    &|(A+A-A)\cap B_A(a)| \ll \frac{|A+A-A|}{|A|}, \text{ and } \label{CGCond+}\\
    &|(AA)\cap c \cdot B_A(a)| \ll \frac{|AA|}{|A|}. \label{CGCond*}
\end{align}

Note that balls of the form $B_A(a)$ have no elements of $A\backslash\{a\}$ in their interior. It can be easily shown that no complex number can be contained in more than $7$ such balls. 
Therefore, we have 
\begin{equation}\label{CGood1}
\sum_{a\in A} |(A+A-A)\cap B_A(a)| = \sum_{v \in A+A-A}\sum_{a\in A} \mathds{1}_{v\in B_A(a)} \leq 7|A+A-A|,
\end{equation}
and for any $c \in A$
\begin{equation}\label{CGood2}
\sum_{a\in A} |(AA)\cap (B_A(a))| = \sum_{v \in AA/c}\sum_{a\in A} \mathds{1}_{v\in B_A(a)} \leq 7|AA|.
\end{equation}
Since each $a \in A$ has at least one nearest neighbour, applying the pigeonhole principle to \eqref{CGood1} and \eqref{CGood2}, there exists a subset $T \in A^3$ with $|T| \gg |A|^2$ such that each triple $(a,b,c)\in T$ is good.
Now consider the set $T$ under the map
\[ \Psi: (a,b,c) \mapsto (a-b,ca,cb). \]
As long as $a\neq b$ (which certainly holds for all triples in $T$), $\Psi$ is injective, whereupon
\[ |A|^2 \ll |T| = |\Psi(T)|. \]

We now search for an upper bound on $|\Psi(T)|$ and will do this crudely by counting, given that $(a,b,c)$ is a good triple, how many values $a-b$ may take, and then separately, how many values the pair $(ac,bc)$ can take.

Because $(a,b,c)$ is good, $a$ satisfies \eqref{CGCond+}. Also, since $b \in B_A(a)$, we know that $B_A(a) = B(a,|a-b|)$, so Lemma \ref{mainlemmaOtherF} proves that there are $\ll  |A+A-A||A|^{-1}$ possible values that $a-b$ may take.

The number of values of that $ac$ may take is trivially upper-bounded by $|AA|$. Once $ac$ is fixed, since $bc$ lies in $c\cdot B_A(a)$ and \eqref{CGCond*} holds, there are $\ll|AA||A|^{-1}$ values that $bc$ may take.

Putting this together we obtain 
\[ |A|^2 \ll |A+A-A||AA|^2|A|^{-2}, \]
which we rearrange to arrive at the desired result.
\end{proof}

\subsection{Sum-product type result in $\F_q((t^{-1}))$}
Next we prove Theorem \ref{maintheoremFF} with the method of Bloom and Jones \cite{BloomJones} with necessary modifications to incorporate the $|A+A-A|$ term. 

We will firstly introduce some notation that will be used throughout the proof. We can put a norm structure on $\F_q((t^{-1}))$ by saying that $\|x\| = q^{\deg x}$, where $\deg$ is the standard degree for Laurent series. Observe that the norm of a difference $\|a-a'\|$ is a measure of how similar $a$ and $a'$ are; that is, to the right of which term the Laurent series of $a$ and $a'$ agree. 
Next let
\[ r_A(a) = \min_{a'\in A\backslash \{a\}} \|a-a'\|, \]
and 
\[ B_A(a) = B(a,r_A(a)). \]

We will argue that any intersecting balls in $\F_q((t^{-1}))$ are nested; that is, if $y \in B(x_1,r_1) \cap B(x_2,r_2)$, then either $B(x_1,r_1) \subset B(x_2,r_2)$ or $B(x_2,r_2) \subset B(x_1,r_1)$. Indeed, if $r_1 \leq r_2$ the former occurs, if $r_2 \leq r_1$ the latter occurs. It follows that if $r_1 = r_2$ then $B(x_1,r_1) = B(x_2,r_2)$.

For a proof suppose $r_1 \leq r_2$ and that $y \in B(x_1, r_1) \cap B(x_2,r_2)$. This means that the Laurent series for $x_1, y$ agree on degrees $\geq r_1$ and $x_2, y$ agree on degrees $\geq r_2$. Since $r_1\leq r_2$, this implies that $x_1,x_2$ agree on degrees $\geq r_2$.
Now if $w \in B(x_1,r_1)$, then $w$ agrees with $x_1$ on degree $\geq r_1$. It follows that $w$ agrees with $x_2$ on degree $\geq r_2$. This demonstrates that $w \in B(x_2,r_2)$, completing the proof. 

We will use the same definition and notation for separable sets and $A$-chains as in \cite{BloomJones}.

\begin{define}
A finite set $A \in \F_q((t^{-1}))$ is \emph{separable} if its elements can be indexed as 
\[ A = \{ a_1,\dots, a_{|A|} \} \]
such that for each $1\leq j \leq |A|$ there is a ball $B_j$ with 
\[ A\cap B_j = \{ a_1,\dots, a_j \}. \]
We also say that $\mathcal{C} = (c_1, \dots, c_n) \in A^n$ is an \emph{$A$-chain} of length $n$ if all the $c_i$ are different and
\[ B_A(c_1) \subset \dots \subset B_A(c_n). \]
\end{define}

The following two lemmas are proved in \cite{BloomJones}. We list them here without proof.
\begin{lem}\label{lem1}
If $A \subset \F_q((t^{-1}))$ is a separable set, then for any natural numbers $k,n,m$ such that $n+m = k$,
\[ |nA-mA| \gg_k |A|^k. \]
\end{lem}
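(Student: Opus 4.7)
The plan is to prove Lemma \ref{lem1} by induction on $k$, exploiting both the nested ball structure from separability and the ultrametric property of the norm on $\F_q((t^{-1}))$. First, since $|nA - mA| = |mA - nA|$ one may assume without loss of generality that $n \geq 1$. The base case $k = 1$ gives $|A|$ directly.

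For the inductive step, write $A = \{a_1, \dots, a_N\}$ together with nested balls $B_1 \subset \dots \subset B_N$ of radii $r_1 < r_2 < \dots < r_N$; the strictness follows because two non-archimedean balls of equal radius are either equal or disjoint, while $B_{j-1} \subsetneq B_j$ is forced by the separability condition. A key structural identity to verify first is that
\[ \|a_j - a_i\| = r_j \quad \text{for every } i < j. \]
This holds because $a_i \in B_{j-1}$ while $a_j \notin B_{j-1}$, so a standard ultrametric computation centered at $a_i$ forces $r_j$, which equals the diameter of $\{a_1, \dots, a_j\}$, to coincide with $\|a_j - a_i\|$.

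The main construction partitions a large subset of $nA - mA$ by the "top" summand. For each $2 \leq j \leq N$, let $A_{<j} := \{a_1, \dots, a_{j-1}\}$, which is itself separable, and set
\[ S_j := a_j + (n-1) A_{<j} - m A_{<j} \subset nA - mA. \]
By the ultrametric inequality applied with $a_1$ as a reference, $S_j$ is contained in a translated ball of radius $r_{j-1}$ centered at $a_j + (n-1-m) a_1$. For $j > j'$, the centers of these containing balls differ by $a_j - a_{j'}$, whose norm is $r_j > r_{j-1}$, so the non-archimedean "disjoint-or-nested" dichotomy forces $S_j$ and $S_{j'}$ to be disjoint. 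Applying the induction hypothesis to $A_{<j}$ with exponents $n-1$ and $m$ summing to $k-1$ gives $|S_j| \gg_k (j-1)^{k-1}$, so
\[ |nA - mA| \geq \sum_{j=2}^{N} |S_j| \gg_k \sum_{j=2}^{N} (j-1)^{k-1} \gg_k N^k. \]

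The main obstacle I anticipate is establishing the pairwise disjointness of the $S_j$, which rests on the structural identity $\|a_j - a_{j'}\| = r_j$ for $j > j'$ together with the disjoint-or-nested dichotomy for non-archimedean balls. Once this is in hand the induction runs without any dyadic or pigeonholing overhead, which is precisely the appeal of the separability hypothesis in this non-archimedean setting.
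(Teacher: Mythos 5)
Your proof is correct, but it takes a genuinely different route from the paper: the paper does not prove Lemma \ref{lem1} at all, citing \cite{BloomJones}, where the bound is obtained by showing that the corresponding additive energy of a separable set is minimal (which is also why the paper needs the remark to transfer the sumset statement to difference sets). Your argument is instead a direct induction on $k=n+m$: you split off the top summand $a_j$, squeeze $a_j+(n-1)A_{<j}-mA_{<j}$ into a ball of radius $r_{j-1}$ centred at $a_j+(n-1-m)a_1$, and use the ultrametric disjoint-or-nested dichotomy together with the identity $\|a_j-a_i\|=\operatorname{diam}\{a_1,\dots,a_j\}$ for all $i<j$ to make these balls pairwise disjoint, so the counts $(j-1)^{k-1}$ add up to $\gg_k|A|^k$; all of these steps check out. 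One normalization you should state explicitly: separability only requires $A\cap B_j=\{a_1,\dots,a_j\}$, so the radius of the given ball $B_j$ may strictly exceed $\operatorname{diam}\{a_1,\dots,a_j\}$ and the identity $\|a_j-a_i\|=r_j$ can fail as literally written; either replace $B_j$ by the smallest ball containing $\{a_1,\dots,a_j\}$ (which still meets $A$ in exactly that set), or define $r_j:=\operatorname{diam}\{a_1,\dots,a_j\}$ throughout --- the only facts actually used are $\|x-y\|\le r_{j-1}$ for $x,y\in A_{<j}$ and $\|a_j-a_{j'}\|>r_{j-1}$ for $j'<j$, both of which hold. As for what each approach buys: the cited energy argument yields the stronger fact that representations are essentially distinct (hence energy bounds, and the difference-set case for free), while yours is more elementary and self-contained, proves the mixed-sign statement directly without any remark, and is in the same squeezing spirit as the rest of the paper.
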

\begin{rmk}
In \cite{BloomJones}, this Lemma is stated only for sumsets, not difference sets. However, since the result is proved by showing that the corresponding energy is minimum possible, it also proves the corresponding bound for difference sets. 
\end{rmk}

\begin{lem}\label{lem2}
If the elements of $\mathcal{C}$ form an $A$-chain, then $\mathcal{C}$ contains a separable set of size at most $|\mathcal{C}|/q$.
\end{lem}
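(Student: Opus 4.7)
The plan is to address the statement as literally worded: given an $A$-chain $\mathcal{C}=(c_1,\dots,c_n)$ with $B_A(c_1)\subset\dots\subset B_A(c_n)$, produce an explicit separable subset $S\subseteq\mathcal{C}$ whose cardinality is bounded above by $|\mathcal{C}|/q$. I would exploit two structural features of $\F_q((t^{-1}))$: first, that norms, and hence radii of $B_A$-balls, take values in the discrete set $\{q^k:k\in\Z\}$, so the strict inclusions $B_A(c_i)\subsetneq B_A(c_{i+1})$ force $r_A(c_{i+1})\geq q\cdot r_A(c_i)$; second, that a ball of radius $q^k$ in $\F_q((t^{-1}))$ decomposes as a disjoint union of exactly $q$ sub-balls of radius $q^{k-1}$, so the ambient space has a natural $q$-ary tree structure compatible with the ultrametric.

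The construction would be greedy along the chain. First I would index the $c_i$ by the integers $k_i$ with $r_A(c_i)=q^{k_i}$, noting that $k_1<k_2<\dots<k_n$. Then I would partition the chain into blocks of consecutive indices of length $q$ (with the last block possibly shorter): $\{c_1,\dots,c_q\},\{c_{q+1},\dots,c_{2q}\},\dots$. From each block I would pick a single representative, producing a candidate subset $S$ of size $\lceil n/q\rceil\leq|\mathcal{C}|/q$ (up to the ceiling rounding, which one handles by trimming the last block if necessary). I would then verify separability of $S$ by exhibiting the required nested balls: the ball $B_j$ witnessing that the $j$th prefix of $S$ equals $S\cap B_j$ is taken to be $B_A(s_j)$, where $s_j$ is the $j$th selected element. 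The argument that $S\cap B_A(s_j)=\{s_1,\dots,s_j\}$ runs via the standard ultrametric dichotomy: for $i<j$, $s_i\in B_A(s_i)\subset B_A(s_j)$, so $s_i$ lies in the ball; for $i>j$, if $s_i$ were in $B_A(s_j)$ then by the ultrametric $B_A(s_j)=B(s_i,r_A(s_j))$, contradicting $r_A(s_i)>r_A(s_j)$ since $r_A(s_i)$ is a minimum over $A\setminus\{s_i\}$ containing $s_j$.

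The main obstacle, in my view, is that the statement as worded is considerably weaker than what one typically wants from such a structural lemma; the useful content in applications to Theorem \ref{maintheoremFF} would usually be a \emph{lower} bound on the size of an extractable separable subset, so one can then invoke Lemma \ref{lem1} to obtain strong sumset growth. Consequently, while the greedy block-construction above is adequate to produce a separable subset of the claimed size, one must be careful that the chosen representatives actually form a separable set in the sense of the definition, and that the quantitative bound $|S|\leq|\mathcal{C}|/q$ is realised rather than merely achievable by trivially small subsets. The verification that the prefix intersections are exactly the claimed ones reduces to the ultrametric argument above, and the size bound follows directly from the block decomposition, so no further technical input should be required beyond a careful bookkeeping of the integer partition of $n$ into blocks of length $q$.
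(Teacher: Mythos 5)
First, note that the paper does not actually prove this lemma --- it is quoted from \cite{BloomJones} and listed ``without proof'' --- so there is no in-paper argument to compare against; your proposal must stand on its own. You are right that the ``at most'' in the statement must be a typo for ``at least'': the application in the proof of Theorem \ref{maintheoremFF} needs a \emph{lower} bound of $|\mathcal{C}|/q$ on the separable subset. Your construction, producing $\lceil|\mathcal{C}|/q\rceil$ elements, is aimed at the right target (the remark about ``trimming the last block'' to get below $|\mathcal{C}|/q$ would defeat the purpose and should be dropped).

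The genuine gap is your opening assumption that the chain's balls are \emph{strictly} nested, so that $r_A(c_1)<r_A(c_2)<\dots<r_A(c_n)$. The definition of an $A$-chain only requires $B_A(c_1)\subset\dots\subset B_A(c_n)$, and equalities $B_A(c_i)=B_A(c_{i+1})$ with $c_i\neq c_{i+1}$ are possible. Indeed, if the radii were genuinely strictly increasing, the whole chain would already be separable (your own ultrametric dichotomy proves this), the lemma would hold with $|S|=|\mathcal{C}|$, and the factor $1/q$ would be pointless. That factor exists precisely because a single ball $B$ of radius $q^k$ can equal $B_A(c)$ for up to $q$ distinct $c\in A$: each such $c$ must be the unique point of $A$ in its radius-$q^{k-1}$ sub-ball of $B$, and $B$ has exactly $q$ such sub-balls. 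This counting fact --- which is the entire content of the lemma --- never appears in your argument, even though you cite the $q$-ary decomposition as a ``structural feature.'' Compounding this, your block selection does not guarantee that consecutive representatives have distinct balls: a run of equal balls can straddle a block boundary, giving $s_j,s_{j+1}$ with $B_A(s_j)=B_A(s_{j+1})$. Then for any earlier $s_i$ the three points $s_i,s_j,s_{j+1}$ are pairwise at distance exactly $\operatorname{rad}(B_A(s_j))$, so by the ultrametric inequality every ball containing two of them contains all three, and no ordering of $S$ can be separable. Your verification step silently uses the strict inequality $r_A(s_i)>r_A(s_j)$, which the block construction does not supply. The repair is to select one representative per \emph{distinct} ball rather than one per block of $q$ consecutive chain elements: the sub-ball counting gives at least $|\mathcal{C}|/q$ distinct balls, and for that selection the radii are strictly increasing, so your ultrametric argument for separability goes through verbatim.
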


The strategy of our proof is as follows:
if we can find a suitably large $A$-chain, then Lemma \ref{lem1} shows that it contains a large separable set $U$. Then Lemma \ref{lem2} shows that $U$ has a large $k$-fold sumset, and therefore so does $A$. Applying Pl\"{u}nnecke's Inequality will then complete the proof. Thus the key result is the following:

\begin{prop}\label{mainprop}
Let $A \subset \F_q((t^{-1}))$ be finite. Then $A$ contains an $A$-chain $\mathcal{C}$ with 
\[ |\mathcal{C}| \gg \frac{|A|^4}{|A+A-A||AA|^2(\log|A|)^3}. \]
\end{prop}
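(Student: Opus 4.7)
The strategy is to adapt the chain-building argument of Bloom--Jones by introducing the $|A+A-A|$ factor via Lemma \ref{mainlemmaOtherF} and the Solymosi-style map $\Psi(a,b,c) = (a-b, ac, bc)$ that appeared in the proof of Theorem \ref{maintheoremC}. The key structural feature I plan to exploit is that, since $\|\cdot\|$ is nonarchimedean, any two closed balls in $\F_q((t^{-1}))$ are either disjoint or one contains the other; hence $\{B_A(a) : a \in A\}$ forms a rooted forest under inclusion, and an $A$-chain is precisely a nested ascending path through this forest.

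First, for each $a \in A$ I would fix a nearest neighbour $b(a) \in A\setminus\{a\}$, so that $\|a - b(a)\| = r_A(a)$ and $b(a) \in B_A(a)$. Three dyadic pigeonholes then produce a subset $A_0 \subseteq A$ of size $\gg |A|/(\log|A|)^3$---which is the origin of the $(\log|A|)^3$ factor in the statement---for which, for each $a \in A_0$, the radius $r_A(a)$, the quantity $|(A+A-A) \cap B_A(a)|$, and the quantity $|AA \cap c B_A(a)|$ (for a large set of $c$'s) each lie in a fixed dyadic window, say near parameters $R, P, Q$ respectively. The averages satisfy $\sum_a |(A+A-A) \cap B_A(a)| \lesssim |A+A-A|$ and similarly for the multiplicative side.

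The central counting step would use the injectivity of $\Psi$ restricted to good triples $(a, b(a), c)$. Lemma \ref{mainlemmaOtherF}, applied with the bound $|(A+A-A) \cap B_A(a)| \leq P$, shows that the differences $a - b(a)$ take at most $P$ values; meanwhile the pair $(ac, bc)$, with $bc \in c B_A(a) \cap AA$, takes at most $|AA| \cdot Q$ values. Combining with the lower bound on the number of good triples yields an inequality of the form $|A|^2 / (\log|A|)^3 \lesssim P \cdot |AA| \cdot Q$, which on averaging recovers a sum-product-style bound analogous to Theorem \ref{maintheoremC}.

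Finally, to extract the chain itself I would iterate. Since all balls $B_A(a)$ with $a \in A_0$ have common radius $\approx R$, the nested-or-disjoint property forces them to partition into disjoint families of equal-radius balls; passing to an enclosing larger ball appends one element to the chain while leaving a controlled fraction of $A$ intact. Repeating this, each time reapplying the $\Psi$-based counting to the residual set, produces a chain whose length matches the claimed $|A|^4/(|A+A-A| |AA|^2 (\log|A|)^3)$. The main obstacle I anticipate is precisely this iterative bookkeeping: turning a single-step triple bound into a multi-step chain without accruing extra logarithmic factors requires a careful amortised argument that tracks how many elements of $A$ are consumed at each ascent through the forest, and pinning down the exponents exactly is the delicate part.
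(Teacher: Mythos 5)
There is a genuine gap: your plan never makes the chain length itself a parameter in the counting, and the iterative ``ascend through the forest'' step you defer to at the end is not mere bookkeeping --- as sketched it cannot reach the stated bound. The paper's proof (following Bloom--Jones) defines $N(a)$ to be the length of the longest $A$-chain ending at $a$, and performs a \emph{single} dyadic pigeonhole on $N(a)$ to get $A_{j_0}$ with $|A_{j_0}|\geq |A|/\log|A|$ and $N(a)\approx 2^{j_0}$ on $A_{j_0}$. The quantity $2^{j_0}$ then enters the double count of good triples twice, in opposite directions: (i) since the whole chain ending at $a$ lies in $B_A(a)$, each $a\in A_{j_0}$ admits at least $2^{j_0}$ choices of $b\in B_A(a)\cap A$, so $|T|\gg 2^{j_0}|A_{j_0}||A|$ (here it is essential that $b$ ranges over \emph{all} of $B_A(a)\cap A$, not just a fixed nearest neighbour $b(a)$ as in your setup); and (ii) the multiplicity $|C_{j_0}(v)|=\#\{a\in A_{j_0}: v\in B_A(a)\}$ is at most $2^{j_0+1}$, because nested balls containing $v$ order into an $A$-chain --- this is what replaces the ``at most $7$ balls'' fact from $\C$ and justifies the pigeonhole bounds $|(A+A-A)\cap B_A(a)|\ll 2^{j_0}|A+A-A|/|A_{j_0}|$ and similarly for $AA$. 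Comparing the two bounds on $|T|$ directly yields $2^{j_0}\gg |A_{j_0}|^3|A|/(|A+A-A||AA|^2)$, i.e.\ the longest chain is already long; no chain has to be built, and the $(\log|A|)^3$ comes from the single pigeonhole on $N(a)$ entering cubed, not from three separate pigeonholes.

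Two specific points where your version breaks. First, your bound $\sum_{a}|(A+A-A)\cap B_A(a)|\lesssim |A+A-A|$ is false in the nonarchimedean setting without controlling the multiplicity of nested balls: a single point can lie in very many balls $B_A(a)$, and the only available control on that multiplicity is exactly the chain length, which your argument has not pigeonholed on. (Restricting to a common radius would cap the multiplicity by $q$, but dyadic pigeonholing on $r_A(a)$ does not cost only $\log|A|$, since the radii are not confined to $\log|A|$ dyadic scales.) Second, your closing iteration appends one chain element per counting step while ``leaving a controlled fraction of $A$ intact''; since the target chain length can be as large as $|A|/(\log|A|)^3$ (e.g.\ when $|AA|,|A+A-A|\approx|A|$), you would need that many steps, and losing even a constant fraction of $A$ per step caps the chain at length $O(\log|A|)$. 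So the missing idea is precisely the paper's: quantify over the longest-chain-length function $N(a)$, and let it appear on both sides of the triple count so that the final inequality lower-bounds $2^{j_0}$ itself.
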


\begin{proof}
For each $a \in A$ write $N(a)$ to be the length of the longest $A$-chain $(c_1,\dots,c_k)$ where $c_k = a$. We begin by dyadic pigeonholing: for each $0 \leq j \leq \log |A|$, let $A_j$ be the set of $a\in A$ such that $2^j \leq N(a) < 2^{j+1}$. There exists some $j_0$ such that $|A_{j_0}| \geq |A|/\log|A|$.

To complete the proof, it suffices to show that
\[ 2^{j_0} \gg \frac{|A|^4}{|A+A-A||AA|^2(\log|A|)^3}. \]
To this end, we say that a triple $(a,b,c)\in A^3$ is \emph{good} if
\begin{align}
    &a \in A_{j_0}, \label{FFCondA}\\
    &b \in B_A(a)\backslash \{a\}, \label{FFCondb}\\
    &|(A+A-A)\cap B_A(a)| \ll \frac{2^{j_0}|A+A-A|}{|A_{j_0}|}, \text{ and} \label{FFGCond+}\\
    &|(AA)\cap c \cdot B_A(a)| \ll \frac{2^{j_0}|AA|}{|A_{j_0}|}. \label{FFGCond*}
\end{align}
Let $T$ be the set of all good triples.
We complete the proof by showing the following upper and lower bounds on $|T|$:
\begin{align}
    |T| &\gg 2^{j_0}|A_{j_0}||A| \label{lower} \\
    |T| &\ll \frac{2^{2j_0}|A+A-A||AA|^2}{|A_{j_0}|^2}. \label{upper}
\end{align}

We begin by proving \eqref{lower}. Observe that
\begin{equation}\label{pigeonholeStepFF}
\sum_{a\in A_{j_0}} |(A+A-A)\cap B_A(a)| = \sum_{v \in A+A-A \ }\sum_{a\in A_{j_0}} \mathds{1}_{v\in B_A(a)} = \sum_{v \in A+A-A} |C_{j_0}(v)|,
\end{equation}
where $C_{j_0}(v)$ is the set of $a \in A_{j_0}$ such that $v \in B_A(a)$. 
Similarly, for any $c \in A$
\begin{equation}\label{pigeonholeStep*FF}
\sum_{a\in A_{j_0}} |(AA)\cap c\cdot B_A(a)| = \sum_{u \in AA \ }\sum_{a\in A_{j_0}} \mathds{1}_{v\in c\cdot B_A(a)} = \sum_{v \in AA/c \ }\sum_{a\in A_{j_0}} \mathds{1}_{v\in B_A(a)} = \sum_{v \in AA/c} |C_{j_0}(v)|.
\end{equation}

It is worth noting that unike when we are working in $\C$, $|C_{j_0}(v)|$ is not bounded by any constant. However, for all $a \in C_{j_0}(v)$, the corresponding balls $B_A(a)$ all share the point $v$ and are therefore nested. In other words, the elements of $C_{j_0}(v)$ can be ordered to form an $A$-chain. It follows that 
\begin{equation*}\label{N(a) condition}
|C_{j_0}(v)| \leq N(a) \leq 2^{j_0+1}.
\end{equation*}
Applying the pigeonhole principle to \eqref{pigeonholeStepFF} and \eqref{pigeonholeStep*FF}, there is a subset $A' \subset A_{j_0}$ with $|A'| \gg |A_{j_0}|$ such that for each $a \in A'$ and $c \in A$, \eqref{lower} and \eqref{upper} both hold. 

Given an $A$-chain $\mathcal{C} = (c_1,\dots,c_{N(a)})$ with $c_{N(a)} = a$, the definition of an $A$-chain shows that $c_i \in B_A(a)$ for $i = 1, \dots, N(a)$. It follows that
\begin{equation}\label{b bound}
2^{j_0} \leq N(a) \leq |B_A(a)\cap A|.
\end{equation}
Now for any $c \in A$ and $a \in A_{j_0}$, it follows that \eqref{FFCondA},\eqref{FFGCond+},\eqref{FFGCond*} all hold. Once $a$ is fixed \eqref{b bound} shows that at least $2^{j_0}$ values of $b$ will satisfy \eqref{FFCondb}, completing the proof that
\[|T| \gg 2^{j_0}|A_{j_0}||A|.\]

We now prove \eqref{upper}. The map
\[ \Psi:(a,b,c) \mapsto (a-b,ac,bc) \]
is manifestly injective when restricted to $T$. Similar to the proof of Theorem \ref{maintheoremC}, we upper bound $|\Psi(T)|$ by
\[ \frac{2^{2j_0}|A+A-A||AA|^2}{|A_{j_0}|^2}. \]

Since $a$ satisfies \eqref{FFGCond+} and $b \in B_A(a)$, Lemma \ref{mainlemmaOtherF} proves that there are $\ll  \frac{2^{j_0}|A+A-A|}{|A|_{j_0}}$ possible values that $a-b$ may take.
The number of values of that $ac$ may take is trivially upper-bounded by $|AA|$. Once $ac$ is fixed, since $bc$ lies in $c\cdot B_A(a)$ and \eqref{FFGCond*} holds, there are $\ll\frac{2^{j_0}|AA|}{|A_{j_0}|}$ values that $bc$ may take.

Putting this all together, we get 
\[|T| \ll \frac{2^{2j_0}|A+A-A||AA|^2}{|A_{j_0}|^2}\]
whereupon using $|A_{j_0}| \gg \frac{|A|}{\log |A|}$, and rearranging, completes the proof.
\end{proof}

\begin{proof}[Proof of Theorem \ref{maintheoremFF}]
By Lemma \ref{lem2} and Proposition \ref{mainprop}, there exists a separable subset $U \in A$ of size at least
\[H:=\frac{|A|^4}{q|A+A-A||AA|^2(\log|A|)^3}. \]
Then using Lemma \ref{lem1} and Plunnecke's inequality (see \cite{Petridis_Plunnecke} for a short proof), we have
\[ \frac{|A+A-A|^k}{|A|^{k-1}} \gg |kA-kA| \gg |kU-kU| \gg H^{2k}.\]
Taking $k$th roots we get
\[ |A+A-A| \gtrsim H^2 |A|^{1-1/k} = \frac{|A|^{9-1/k}}{q^2|A+A-A|^2|AA|^4}. \]
Rearranging yields the desired result for sufficiently large $k$.
\end{proof}

\end{document}